
\documentclass[12pt]{amsart}
\usepackage{amssymb,amsfonts,latexsym,amscd}

\usepackage[all]{xy}
\usepackage{verbatim}


\newtheorem{theorem}{Theorem}[section]

\newtheorem{proposition}[theorem]{Proposition}
\newtheorem{corollary}[theorem]{Corollary}
\theoremstyle{definition}
\newtheorem{definition}[theorem]{Definition}

\newtheorem{example}[theorem]{Example}

\newtheorem{remark}[theorem]{Remark}


\newcommand{\Ker}{\text{Ker\,}}

\newcommand{\End}{\text{End}}

\newcommand{\FPdim}{\text{\rm FPdim}}

\newcommand{\GL}{\text{GL}}
\newcommand{\Br}{\text{Br}}
\newcommand{\SL}{\text{SL}}
\newcommand{\Hom}{\text{Hom}}

\newcommand{\Id}{{\rm Id}}
\newcommand{\Aut}{{\rm Aut}}

\newcommand{\Rep}{\text{Rep}}
\newcommand{\Vect}{\text{Vec}}

\newcommand{\ot}{\otimes}

\newcommand{\ben}{\begin{enumerate}}
\newcommand{\een}{\end{enumerate}}

\newcommand{\C}{{\mathcal C}}
\newcommand{\D}{{\mathcal D}}

\newcommand{\Z}{{\mathbb Z}}

\hyphenation{se-mi-simple co-se-mi-simple}


\begin{document}

\title[Descent and forms of tensor categories]
{Descent and forms of tensor categories}
\author{Pavel Etingof}
\address{Department of Mathematics, Massachusetts Institute of Technology,
Cambridge, MA 02139, USA} \email{etingof@math.mit.edu}

\author{Shlomo Gelaki}
\address{Department of Mathematics, Technion-Israel Institute of
Technology, Haifa 32000, Israel} \email{gelaki@math.technion.ac.il}

\date{\today}

\keywords{descent, forms, categories}

\begin{abstract}
We develop a theory of descent and forms of tensor categories over
arbitrary fields. We describe the general scheme of classification
of such forms using algebraic and homotopical language, and give
examples of explicit classification of forms. We also discuss the
problem of categorification of weak fusion rings, and for the
simplest families of such rings, determine which ones are
categorifiable.
\end{abstract}

\maketitle

\section{Introduction}

The goal of this paper is to give an exposition of the theory of
forms of tensor categories over arbitrary fields, providing a
categorical counterpart of the classical theory of forms of
algebraic structures (such as associative algebras, Lie algebras,
Hopf algebras, algebraic varieties, etc.). We provide a
classification of forms both in the algebraic language and in the
homotopical language, using the theory of higher groupoids,
similarly to \cite{ENO2}. Ideologically, this is not really new,
since it is an application of the general ideology of descent
theory. The point of this paper is to work out the classification of
forms more or less explicitly in the setting of tensor categories,
and to discuss examples of this classification for specific tensor
categories. In particular, we show that even in the simplest cases,
such classification problems reduce to interesting questions in
number theory, such as the classification of constructible regular
polygons (i.e., Fermat primes), the Merkurjev-Suslin theorem, and
global class field theory.

The organization of the paper is as follows. In Section 2, we recall
the classical theory of forms of algebraic structures (i.e., the
theory of descent), and reformulate it in the homotopical language.
In Section 3, we explain a generalization of this theory to
semisimple abelian categories and then to semisimple tensor
categories. Finally, in Section 4, we discuss the theory of
categorification of weak unital based rings by rigid tensor
categories over arbitrary fields.

{\bf Acknowledgments.} We are grateful to V. Ostrik for useful
discussions, in particular for suggesting Theorem \ref{sab}.
The research of the first author was
partially supported by the NSF grant DMS-1000113. The second author
was supported by The Israel Science Foundation (grant No. 317/09).
Both authors were supported by BSF grant No. 2008164.

\section{Forms of algebraic structures}

In this section we give an exposition of
the well known theory of forms of algebraic structures.

\subsection{Definition of a form.}
Let $K$ be a field, and let $L$ be an extension field of $K$. Let
$C$ be an algebraic structure defined over $L$ (associative algebra,
Lie algebra, Hopf algebra, algebraic variety, etc.).

\begin{definition}
A {\it form} of $C$ over $K$ (or a {\it descent} of $C$ to $K$) is
an algebraic structure ${\overline{C}}$ defined over $K$, of the
same type as $C$, together with an $L-$linear isomorphism $\xi:
{\overline{C}}\ot_K L\to C$.

An {\it isomorphism} of two forms $({\overline{C}}_1,\xi_1)$,
$({\overline{C}}_2,\xi_2)$ is an isomorphism $\theta:
{\overline{C}}_1\to {\overline{C}}_2$ not necessarily respecting
$\xi_1,\xi_2$. A {\it framed isomorphism} is an isomorphism $\theta$
such that $\xi_2\circ \theta=\xi_1$.
\end{definition}

\subsection{The first condition.}
Let us recall necessary and sufficient conditions for the existence
of a form of $C$ over $K$. For simplicity let us assume that $L$ is
a \emph{finite Galois extension} of $K$ (the case of infinite Galois
extensions is similar, and the general case can be reduced to the
case of a Galois extension). Let $\Gamma:={\rm Gal}(L/K)$ be the
Galois group of $L$ over $K$.

For any $g\in\Gamma$, let ${}^gC$ denote the algebraic structure
obtained from $C$ by twisting its $L-$structure by means of $g$.
This operation is a functor: any morphism $\beta: C\to D$ gives rise
to a morphism $g(\beta): {}^gC\to {}^gD$, which set-theoretically is
the same as $\beta$ (its only difference from $\beta$ is that its
source and target have been twisted by $g$).

Let us call a pair $(g,\varphi)$ consisting of $g\in \Gamma$ and an
isomorphism \linebreak $\varphi:{}^gC\to C$, a {\em twisted
automorphism} of $C$, and let $\Aut_K(C)$ be the group of all
twisted automorphisms of $C$. We have a group homomorphism $\psi:
\Aut_K(C)\to \Gamma$ whose kernel is the group $\Aut(C)$ of
automorphisms of $C$. Clearly, if a form of $C$ over $K$ exists then
$\psi$ must be surjective, i.e., there is a short exact sequence of
groups
\begin{equation}\label{ext}
1\xrightarrow{} \Aut(C)\xrightarrow{} \Aut_K(C)\xrightarrow{\psi}
\Gamma \xrightarrow{} 1.
\end{equation}
Equivalently, in topological terms, there is a fibration
\begin{equation}
\label{fib} \xymatrix{B\Aut_K(C)\ar[d]^{B\Aut(C)} &&\\
B\Gamma &&}
\end{equation}
where $BG$ denotes the classifying space of a group $G$.

\subsection{The second condition.}
Moreover, if a form of $C$ over $K$ exists,
we must be able to choose a set of representatives
$$
\varphi:=\{\varphi_g\in
\psi^{-1}(g)\mid g\in \Gamma\}
$$
such that the conditions
\begin{equation}\label{1cocyc}
\varphi_{gh}=\varphi_g\circ g(\varphi_h),\,g,h\in \Gamma,
\end{equation}
are satisfied, that is, the extension in (\ref{ext}) must split.
Equivalently, the fibration in (\ref{fib}) must have a section
$\sigma_\varphi$.

\subsection{Sufficiency of the two conditions.}
It turns out that this is also sufficient. Namely, we have
the following standard result.

\begin{proposition}
A form of $C$ over $K$ exists if and only if the extension in
(\ref{ext}) splits, i.e., if and only if the fibration in
(\ref{fib}) has a section. \qed
\end{proposition}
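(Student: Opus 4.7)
The necessity direction has already been established in the preceding subsections: a form gives rise to a splitting by letting $\varphi_g$ act on $\bar C \otimes_K L$ via $\id_{\bar C} \otimes g$. My plan for sufficiency is the classical Galois descent argument, translating the cocycle $\{\varphi_g\}$ into a semilinear action on $C$ and taking invariants.

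First, I would turn the section $\varphi$ into a semilinear $\Gamma$-action on $C$. Because ${}^gC$ and $C$ share the same underlying set (only the $L$-structure is twisted), each isomorphism $\varphi_g:{}^gC\to C$ defines a bijection $\rho_g: C\to C$ of the underlying set that preserves all of the structural operations (as $\varphi_g$ is a morphism of the given algebraic type) but satisfies
\begin{equation*}
\rho_g(\lambda x)=g(\lambda)\,\rho_g(x),\qquad \lambda\in L,\; x\in C,
\end{equation*}
i.e.\ $\rho_g$ is $g$-semilinear. The cocycle condition (\ref{1cocyc}), unwound in the original category, becomes literally $\rho_{gh}=\rho_g\circ \rho_h$, so the family $\{\rho_g\}$ is a genuine left action of $\Gamma$ on $C$ by $K$-linear, $L$-semilinear automorphisms of the algebraic structure.

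Next, I would define $\bar C:=C^{\Gamma}$, the fixed points of this action. Semilinearity makes $\bar C$ a $K$-subspace of $C$, and since every $\rho_g$ preserves the operations, $\bar C$ is closed under them, so it inherits an algebraic structure of the same type as $C$ over $K$. The natural multiplication map
\begin{equation*}
\xi:\bar C\otimes_K L\longrightarrow C,\qquad \bar c\otimes \lambda\longmapsto \lambda\bar c,
\end{equation*}
is by construction a morphism of $L$-algebraic structures; one only needs to know that it is an isomorphism of the underlying $L$-vector spaces. This is the content of the classical Galois descent theorem for vector spaces, which for finite-dimensional $V$ follows from the normal basis theorem (equivalently Hilbert 90, or faithfully flat descent for the $\Gamma$-Galois extension $L/K$), and for the general case by applying it to each $\Gamma$-stable finite-dimensional subspace of $C$.

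The main obstacle is psychological rather than technical: one must keep careful track of the distinction between $C$ as an $L$-structure and $C$ as a set carrying a $g$-twisted $L$-action, so that the cocycle identity (\ref{1cocyc})---which a priori mixes $\varphi_g$ and $g(\varphi_h)$---becomes an honest group-action identity for $\rho_g$. Once that is done, the remainder is a black-box application of Galois descent, and verification that $\xi$ respects the algebraic operations is automatic from the fact that both sides agree on the generating subset $\bar C\subset C$.
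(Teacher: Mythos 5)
Your argument is correct and is essentially the proof the paper has in mind: immediately after the proposition the authors sketch exactly this construction, taking the fixed points $\overline{A}=\{a\in A\mid \varphi_g(a)=a\ \forall g\}$ of the semilinear action determined by the splitting and noting that any form conversely yields a splitting. Your write-up just fills in the details (semilinearity, the cocycle identity becoming a genuine group action, and the appeal to classical Galois descent for the isomorphism $\overline{C}\otimes_K L\to C$), all of which is standard and accurate.
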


For example, if $C$ is an (associative, Lie, Hopf, etc.) algebra $A$
over $L$ and the extension in (\ref{ext}) splits via $\varphi$ then
the set of fixed points $\overline{A}:=\{a\in A\mid
\varphi_g(a)=a,\, \forall g\in \Gamma\}$ is a form of $A$ over $K$.
Conversely, it is easy to see that any form canonically defines a
splitting.

\begin{example}
Suppose that $\Aut(C)$ is an abelian group. In this case, extension
(\ref{ext}) yields an action of $\Gamma$ on ${\rm Aut}(C)$, and the
obstruction to the existence of a form of $C$ over $K$ lies in
$H^2(\Gamma,\Aut(C))$. Indeed, suppose $\psi$ is surjective, and
pick a collection of pre-images $\varphi_g\in \psi^{-1}(g)$ for
$g\in \Gamma$. Then we have
$$
\varphi_{gh}=\alpha(g,h)\circ \varphi_g\circ g(\varphi_h),
$$
where $\alpha: \Gamma\times \Gamma\to {\rm Aut}(C)$, and it easy to
check that $\alpha$ is a $2-$cocycle: $\alpha\in Z^2(\Gamma,{\rm
Aut}(C))$. Moreover, $\varphi_g$ can be corrected to satisfy
(\ref{1cocyc}) if and only if this cocycle is a coboundary. Thus the
obstruction to the existence of a form is the class $[\alpha]\in
H^2(\Gamma,\Aut(C))$.
\end{example}

\subsection{Classification of forms.}
Let us now recall the classification of forms of $C$ over $K$. For
this purpose, assume that a form exists. Let us fix one such form,
call it ${\overline{C}}$, and classify all the forms of $C$ over $K$
(which in this situation are also called \emph{twisted forms} of
${\overline{C}}$).

As explained above, forms of $C$ over $K$ correspond to
splittings of extension (\ref{ext}) (i.e., homotopy classes
of sections of fibration (\ref{fib})). So, if
we choose such a splitting $\varphi$ (which, in particular,
defines an action of $\Gamma$ on $\Aut(C)$) then for any other splitting
$\varphi'$, we have $\varphi'_g=\lambda_g\circ \varphi_g$, $g\in
\Gamma$, for some function $\lambda:\Gamma\to \Aut(C)$, $g\mapsto
\lambda_g$.

The following proposition is standard.

\begin{proposition}\label{stan}
The following hold:

(i) $\varphi'$ is a splitting if and only if $\lambda\in
Z^1(\Gamma,\Aut(C))$ (i.e., $\lambda$ is a $1-$cocycle).

(ii) $\lambda_1,\lambda_2\in Z^1(\Gamma,\Aut(C))$ determine the same
form up to a framed isomorphism if and only if they define the same
cohomology class in $H^1(\Gamma,\Aut(C))$. \qed
\end{proposition}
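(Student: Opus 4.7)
The plan is to prove (i) by direct substitution and (ii) by tracking how forms correspond to splittings via their fixed-point sets.

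For (i), I would write $\varphi'_g = \lambda_g \circ \varphi_g$ and expand both sides of the splitting condition $\varphi'_{gh} = \varphi'_g \circ g(\varphi'_h)$. The crucial ingredient is that the chosen splitting $\varphi$ induces an action of $\Gamma$ on $\Aut(C)$ by $g \cdot \alpha := \varphi_g \circ g(\alpha) \circ \varphi_g^{-1}$. Using this identity together with the splitting property of $\varphi$, one gets
$$
\varphi'_g \circ g(\varphi'_h) = \lambda_g \circ \varphi_g \circ g(\lambda_h) \circ g(\varphi_h) = \lambda_g \circ (g \cdot \lambda_h) \circ \varphi_{gh},
$$
so $\varphi'_{gh} = \varphi'_g \circ g(\varphi'_h)$ becomes $\lambda_{gh} = \lambda_g \cdot (g \cdot \lambda_h)$, which is precisely the non-abelian $1$-cocycle condition. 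This is essentially the whole proof of (i).

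For (ii), I would use the correspondence (implicit in the preceding proposition) between splittings of \eqref{ext} and forms: a splitting $\varphi'$ defines a semilinear $\Gamma$-action on $C$, and its fixed-point set is the associated form $\bar{C}'$. Assume $\lambda_1 \sim \lambda_2$, say $\lambda_{2,g} = \mu^{-1} \cdot \lambda_{1,g} \cdot (g \cdot \mu)$ for some $\mu \in \Aut(C)$. Expanding via the same identity used in (i) gives
$$
\varphi'_{2,g} = \mu^{-1} \circ \lambda_{1,g} \circ \varphi_g \circ g(\mu) = \mu^{-1} \circ \varphi'_{1,g} \circ \mu
$$
as set-theoretic self-maps of $C$, so $\mu$ intertwines the two $\Gamma$-actions and restricts to a $K$-linear isomorphism $\bar{C}_2 \to \bar{C}_1$; after readjusting $\xi_2$ by $\mu$ this is a framed isomorphism of forms. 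Conversely, any framed isomorphism between the two forms extends uniquely by scalars to an $L$-automorphism $\mu$ of $C$ intertwining the two actions, and reversing the computation above yields the coboundary relation between $\lambda_1$ and $\lambda_2$.

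The main obstacle throughout is non-abelian bookkeeping: the $\Gamma$-action on $\Aut(C)$ depends on the basepoint splitting $\varphi$, and one must carefully distinguish between an $L$-linear automorphism $\mu$ of $C$ and its twist $g(\mu)$, which agree as set maps $C \to C$ but differ as semilinear maps. Once this is pinned down, both directions of (ii) reduce to the same manipulation that proves (i).
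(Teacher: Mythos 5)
The paper offers no proof of this proposition --- it is stated as ``standard'' with a \verb|\qed| in the statement --- so there is nothing internal to compare against; judged on its own, your argument is the standard Galois-descent computation and is essentially correct. Part (i) is exactly right: with the action $g\cdot\alpha=\varphi_g\circ g(\alpha)\circ\varphi_g^{-1}$ (which is a genuine action precisely because $\varphi$ is a splitting), the identity $\varphi_g\circ g(\lambda_h)=(g\cdot\lambda_h)\circ\varphi_g$ converts the splitting condition for $\varphi'$ into the non-abelian cocycle identity $\lambda_{gh}=\lambda_g\,(g\cdot\lambda_h)$. Your treatment of the set-theoretic coincidence of $\mu$ and $g(\mu)$ is also the right way to keep the semilinear bookkeeping straight.

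The one point to be more careful about is the phrase ``after readjusting $\xi_2$ by $\mu$'' in (ii). Under the paper's literal definition, a framed isomorphism $\theta$ must satisfy $\xi_2\circ\theta=\xi_1$ on the nose, and that condition forces the two induced semilinear actions --- hence the two cocycles --- to be \emph{equal}, not merely cohomologous. What your computation actually shows is that cohomologous cocycles yield forms whose fixed-point categories are identified by $\mu$, i.e.\ the pairs $(\overline{C}_1,\xi_1)$ and $(\overline{C}_2,\mu\circ\xi_2)$ are framed isomorphic; this is the intended content of (ii), and it is the statement one needs for the Corollary (note that in this strict algebraic setting the conjugation action of $\Aut(C)^\Gamma$ on $H^1(\Gamma,\Aut(C))$ is by coboundaries, so the paper's ``framed'' and ``unframed'' counts in fact coincide with the classical statement that $H^1$ classifies forms up to $K$-isomorphism). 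So the wrinkle you hit is an imprecision in the proposition's wording (imported from the categorical setting, where framing is only required up to isomorphism of functors and there is genuine slack), not a gap in your reasoning; your converse direction --- extending any isomorphism of forms to an $L$-automorphism $\mu$ of $C$ and reading off $\lambda_2=\mu^{-1}\lambda_1(g\cdot\mu)$ --- is correct as written.
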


\begin{corollary}
The framed isomorphism classes of twisted forms of ${\overline{C}}$
over $K$ are in a natural bijection with the set
$H^1(\Gamma,\Aut(C))$, and the unframed isomorphism classes of
twisted forms are in a natural bijection with orbits of the group
${\rm Aut}(C)^\Gamma={\rm Aut}(\overline{C})$ on  $H^1(\Gamma,{\rm
Aut}(C))$. \qed
\end{corollary}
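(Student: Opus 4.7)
The plan is to deduce both statements directly from Proposition \ref{stan}, together with an explicit analysis of the difference between framed and unframed isomorphism.

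The first assertion is essentially immediate. Fixing the reference form $\overline{C}$ is equivalent to fixing a splitting $\varphi$ of the extension (\ref{ext}). By part (i) of Proposition \ref{stan}, any other splitting has the form $\varphi'_g=\lambda_g\varphi_g$ for a unique $1$-cocycle $\lambda\in Z^1(\Gamma,\Aut(C))$, and by part (ii) two such cocycles $\lambda_1,\lambda_2$ yield framed isomorphic forms if and only if $[\lambda_1]=[\lambda_2]$ in $H^1(\Gamma,\Aut(C))$. Hence the assignment sending a twisted form to the cohomology class of its defining cocycle provides the desired bijection between framed isomorphism classes and $H^1(\Gamma,\Aut(C))$.

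For the second assertion, observe that unframed isomorphism classes are a quotient of framed ones, since every framed isomorphism is in particular an isomorphism. The extra identifications arise from isomorphisms $\theta:\overline{C}^{\lambda_1}\to\overline{C}^{\lambda_2}$ that do not respect the framings $\xi^{\lambda_i}$. Base-changing such a $\theta$ to $L$ and conjugating by the framings produces an element $\mu\in\Aut(C)$ intertwining the two twisted Galois actions on $C$. The $\mu$ that arise from automorphisms of the reference form $\overline{C}$---equivalently, $\mu\in\Aut(\overline{C})=\Aut(C)^\Gamma$ under the natural embedding $\Aut(\overline{C})\hookrightarrow\Aut(C)$---account precisely for the additional identifications. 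The induced (conjugation) action of $\Aut(\overline{C})$ on cocycles descends to an action on $H^1(\Gamma,\Aut(C))$ whose orbits are exactly the unframed isomorphism classes.

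The main step requiring care is the verification that this $\Aut(\overline{C})$-action on $H^1(\Gamma,\Aut(C))$ is well-defined and that two cocycles determine unframed isomorphic twisted forms precisely when they lie in the same orbit. This reduces to bookkeeping of how a non-framed isomorphism of twisted forms modifies the defining cocycle, and is a direct extension of the cocycle calculations already used in Proposition \ref{stan}.
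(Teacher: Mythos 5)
Your first assertion is handled the same way the paper does: the bijection between framed isomorphism classes and $H^1(\Gamma,\Aut(C))$ is a direct restatement of Proposition \ref{stan}, and no further argument is needed there.

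The second assertion is where your proposal has a genuine gap. Given an unframed isomorphism $\theta:\overline{C}_1\to\overline{C}_2$ between twisted forms with cocycles $\lambda^1,\lambda^2$, transporting $\theta\otimes\id_L$ through the framings produces $\mu:=\xi_2\circ(\theta\otimes\id_L)\circ\xi_1^{-1}\in\Aut(C)$ satisfying $\lambda^2_g=\mu\circ\lambda^1_g\circ({}^g\mu)^{-1}$ for all $g$, where ${}^g\mu$ denotes the reference $\Gamma$-action determined by $\varphi$. Such a $\mu$ intertwines the \emph{twisted} actions $\varphi^1,\varphi^2$; it is in general not an element of $\Aut(C)^\Gamma$ and is not induced by an automorphism of the reference form $\overline{C}$. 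So your claim that ``the $\mu$ that arise from automorphisms of the reference form account precisely for the additional identifications'' is exactly the statement to be proved, and the computation you sketch does not yield it --- it yields only that unframed isomorphism corresponds to the relation $\lambda^2_g=\mu\lambda^1_g({}^g\mu)^{-1}$ with $\mu$ ranging over all of $\Aut(C)$. To finish you must (a) explain how this relation interacts with the framed-equivalence relation of Proposition \ref{stan}(ii) that you already used for the first assertion, and (b) specify the $\Aut(\overline{C})$-action on $H^1(\Gamma,\Aut(C))$ and show that the residual identifications are exactly its orbits; note that for $\mu\in\Aut(C)^\Gamma$ one has $\mu\lambda_g({}^g\mu)^{-1}=\mu\lambda_g\mu^{-1}$, which already differs from $\lambda_g$ by a coboundary, so even defining the action you invoke, and seeing why it is not automatically trivial on cohomology classes, requires care. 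Deferring all of this to ``bookkeeping'' leaves the entire content of the second assertion unestablished; since the paper states the corollary as standard with no proof, the burden of this verification falls entirely on your write-up.
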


\begin{remark}
If the group $\Aut(C)$ is abelian, the action
of $\Gamma$ on ${\rm Aut}(C)$ is defined a priori,
and the above discussion shows
that the set of isomorphism (or framed isomorphism) classes of
forms of $C$ over $K$ is naturally a (possibly empty) torsor
$T$ over the group $H^1(\Gamma,\Aut(C))$, which is trivialized
once we choose a form ${\overline{C}}\in T$.
\end{remark}

\section{Forms of categories}

\subsection{Definition of a form.}
Let $K$ be a field (which for simplicity we will assume to be
perfect), and let $L$ be a field extension of $K$.

Let ${\overline{\C}}$ be a semisimple abelian category over $K$ with
finite-dimensional $\Hom-$spaces. Then one can define a semisimple
abelian category $\C:={\overline{\C}}\otimes_K L$, which is the
Karoubian envelope of the category obtained from ${\overline{\C}}$
by extending scalars from $K$ to $L$ in the spaces of morphisms (for
example, if $G$ is a finite group, then $\Rep_K(G)\otimes_K
L=\Rep_L(G)$). Note that under this operation, simple objects of
${\overline{\C}}$ may cease to be simple, and decompose into several
simple pieces (in particular, $\C$ may have more simple objects than
$\overline{\C}$). Also note that this operation respects the
structures of a tensor category, braided tensor category, etc.

Let us say that a semisimple abelian category $\overline{\C}$ over
$K$ is \emph{split} if for any simple object $X\in \overline{\C}$,
one has $\End(X)=K$. If $\overline{\C}$ is split, then extension of
scalars reduces just to tensoring up with $L$, and taking the
Karoubian envelope is not needed.

Let $\C$ be a semisimple $L-$linear category with finite-dimensional
$\Hom-$spaces (possibly with extra structure, e.g., tensor, braided,
etc.). In this section, we develop a theory of forms of such
categories, categorifying the results of the previous section.
\footnote{Although for simplicity we work with semisimple
categories, our constructions can be generalized to the
non-semisimple case, using an appropriate notion of extension of
scalars for linear categories (see e.g. \cite[p. 155]{dm}). Also,
the condition that $K$ is perfect can be dropped without any changes
if we work with absolutely semisimple categories, i.e., categories
in which endomorphism algebras of simple objects are separable
(which means semisimple after any field extension).}

\begin{definition}
A {\it form} of $\C$ over $K$ (or a {\it descent} of $\C$ to $K$) is
a category ${\overline{\C}}$ defined over $K$, of the same type as
$\C$, together with an $L-$linear equivalence $\Xi:
{\overline{\C}}\ot_K L\to \C$.

An {\it equivalence} of two forms $({\overline{\C}}_1,\Xi_1)$,
$({\overline{\C}}_2,\Xi_2)$ of $\C$ over $K$ is an equivalence
$\Theta: {\overline{\C}}_1\to {\overline{\C}}_2$ not necessarily
respecting $\Xi_1,\Xi_2$. A {\it framed equivalence} is an
equivalence $\Theta$ together with an isomorphism of functors
$\Xi_2\circ \Theta\cong \Xi_1$. We will say that an equivalence
admits a framing if it can be upgraded to a framed equivalence.
\end{definition}

The necessary and sufficient conditions for the existence of a form
of $\C$ over $K$ are similar to the case of algebraic structures.
Namely, as before, let us assume for simplicity that $L$ is a finite
Galois extension of $K$, and let $\Gamma:={\rm Gal}(L/K)$ be the
Galois group of $L$ over $K$.
\subsection{The first condition.}
Let $\C$ be an $L-$linear category (abelian, tensor, braided, etc.).
For any $g\in\Gamma$, let ${}^g\C$ denote the category obtained from
$\C$ by twisting its $L-$structure by means of $g$. This operation
is a $2-$functor.

Let us call a pair $(g,\Phi)$ consisting of $g\in \Gamma$ and an
equivalence $\Phi: {}^g\C\to \C$, a {\em twisted auto-equivalence}
of $\C$, and let $\underline{\Aut}_K(\C)$ be the categorical
$(1-)$group (or gr-category) of all twisted auto-equivalences of
$\C$.

\begin{remark}
There is an (equivalent) approach in which one considers an
auto-equivalence $\Phi_g:\C\to \C$, which is \emph{semi-linear
relative to $g$} (see e.g. \cite[p.158]{dm}). A similar approach can
be taken in the algebraic setting of Section 2, as well.

The reason we prefer to use twisted auto-equivalences is that it
exhibits more clearly why the cohomology classes we get as
obstructions or freedoms are with twisted coefficients (i.e., with
coefficients in a non-trivial module).
\end{remark}

We have a categorical group homomorphism $\Psi:
\underline{\Aut}_K(\C)\to \Gamma$ (where $\Gamma$ is the usual
Galois group regarded as a categorical group) whose kernel is the
categorical group $\underline{\Aut}(\C)$ of auto-equivalences of
$\C$. Clearly, if a form of $\C$ over $K$ exists then $\Psi$ must be
surjective, i.e., there is a short exact sequence of categorical
groups
\begin{equation}\label{ext1}
1\xrightarrow{} \underline{\Aut}(\C)\xrightarrow{}
\underline{\Aut}_K(\C)\xrightarrow{\Psi} \Gamma \xrightarrow{} 1.
\end{equation}
Equivalently, in topological terms, there is a fibration
\begin{equation}
\label{fib1} \xymatrix{B\underline{\Aut}_K(\C)
\ar[d]^{B\underline{\Aut}(\C)} &&\\
B\Gamma. &&}
\end{equation}
Here $B\mathcal{G}$ denotes the classifying space of a categorical
group $\mathcal{G}$. Recall that this space is $2-$type, i.e., it
has two non-trivial homotopy groups $\pi_1=G:={\rm Ob}(\mathcal{G})$
and $\pi_2=A:={\rm Aut}({\bold 1})$, and its structure is determined
by an action of $G$ on $A$ and an element of $H^3(G,A)$.

\begin{example}\label{cycgp}
Let $L:=\Bbb C$, let $p$ be a prime of the form $4k-1$, and let
$\C:=\Vect_{\Z/p\Z}^\omega(\Bbb C)$ be the category of
$\Z/p\Z-$graded complex vector spaces with a non-trivial $3-$cocycle
$\omega$. Let $K:=\Bbb R$, and let $g\in {\rm Gal}(\Bbb C/\Bbb R)$
be the complex conjugation. Then
${}^g\C=\Vect_{\Z/p\Z}^{\omega^{-1}}(\Bbb C)$, which is not
equivalent to $\C$, since $-1$ is a quadratic non-residue modulo
$p$. Hence $\C$ is not defined over $\Bbb R$.
\end{example}

\subsection{The second condition.}
Moreover, if a form exists,
we must be able to choose a set of representatives
$$
\Phi:=\{\Phi_g\in
\Psi^{-1}(g)\mid g\in \Gamma\}
$$
such that the conditions
\begin{equation}\label{1cocyc1}
\Phi_{gh}\cong \Phi_g\circ g(\Phi_h),\,g,h\in \Gamma,
\end{equation}
are satisfied. That is, the extension in (\ref{ext1}) must split at
the level of ordinary groups. Equivalently, the fibration in
(\ref{fib1}) must have a section over the $2-$skeleton of the base.

\begin{example}\label{cycgp1}
Let us keep the setting of Example \ref{cycgp}, except that $p$ is a
prime of the form $4k+1$. Let $m\in \Z/p\Z$ be such that $m^2=-1$.
Then we have an equivalence $\Phi_g: {}^g\C\to \C$ such that
$\Phi_g(X_1)=X_m$ (where $X_0:={\bold 1},\dots,X_{p-1}$ are the
simple objects of $\C$). So the square of this functor is not the
identity on simple objects, i.e., equation (\ref{1cocyc1}) is not
satisfied (for any choice of $m$ and $\Phi$). This implies that $\C$
still does not have a real form, even though it is equivalent to its
complex conjugate category.
\end{example}

\subsection{The third condition (vanishing of
$o_\Phi(\C)\in H^3(\Gamma,{\rm Aut}({\rm Id}_\C))$).} Unlike the
case of algebraic structures, the first two conditions are not
sufficient, and there is another obstruction that must vanish in
order for a form to exist. Namely, suppose that exact sequence
(\ref{ext1}) splits at the level of ordinary groups. Then we can
choose functorial isomorphisms
$$
J_{g,h}: \Phi_g\circ g(\Phi_h)\to \Phi_{gh},\,g,h\in \Gamma.
$$
Thus, we obtain two functorial isomorphisms $$\Phi_g\circ
g(\Phi_h)\circ gh(\Phi_k)\to \Phi_{ghk},\,g,h,k\in \Gamma,$$ namely,
$J_{gh,k}\circ J_{g,h}$ and $J_{g,hk}\circ g(J_{h,k})$. If there
exists a form, there should be a choice of $J_{g,h}$ such that these
two functorial isomorphisms are equal. However, we may consider the
element
$$\omega=\omega_\Phi=\{\omega_{g,h,k}\in \Aut(\Phi_{ghk})
\cong \Aut(Id_{\C})\mid g,h,k\in \Gamma\}$$ such that
$$
\omega_{g,h,k}\circ J_{gh,k}\circ J_{g,h}=
J_{g,hk}\circ g(J_{h,k}).
$$

It is easy to see that the splitting of exact sequence (\ref{ext1})
at the level of ordinary groups gives a homomorphism $\Gamma\to {\rm
Aut}_K(\C)$, and hence an action of $\Gamma$ on ${\rm Aut}({\rm
Id}_\C)$. Moreover, it is readily seen that $\omega$ is a
$3-$cocycle for this action: $\omega\in Z^3(\Gamma,{\rm Aut}({\rm
Id}_\C))$. Finally, changing $J_{g,h}$ corresponds to changing
$\omega$ by a coboundary. Thus the last obstruction to existence of
a form of $\C$ over $K$ is the class $o_\Phi(\C)$ of
$\omega=\omega_\Phi$ in $H^3(\Gamma,{\rm Aut}({\rm Id}_\C))$; a form
exists if and only if this obstruction vanishes. Topologically
speaking, this condition is equivalent to the condition that the
section $\sigma_\Phi$ of fibration (\ref{fib1}) over the
$2-$skeleton of $B\Gamma$ defined by $\Phi$ lifts to the
$3-$skeleton. This is equivalent to the condition that $\sigma_\Phi$
lifts to the entire $B\Gamma$, since any section of (\ref{fib1})
over the $3-$skeleton of $B\Gamma$ extends canonically (up to
homotopy) to the entire base (as $B\underline{\rm Aut}(\C)$ has only
two non-trivial homotopy groups).

Summarizing, we obtain the following proposition.

\begin{proposition}
A form of $\C$ over $K$ exists if and only if extension
(\ref{ext1}) splits as an extension of categorical groups, i.e.,
if and only if the fibration in (\ref{fib1}) has a
section.
\end{proposition}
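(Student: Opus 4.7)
The plan is to establish the two directions separately. The forward implication (form exists $\Rightarrow$ extension splits as categorical groups) is essentially immediate from the analysis preceding the proposition: a form $(\bar{\C},\Xi)$ gives rise to twisted auto-equivalences $\Phi_g := \Xi\circ ({}^g\Xi)^{-1}$, where we use that $\bar{\C}$ is defined over $K$ so that ${}^g(\bar{\C}\otimes_K L)$ is canonically identified with $\bar{\C}\otimes_K L$. Associativity of functor composition then supplies coherence data $J_{g,h}$ for which the $3$-cocycle $\omega_\Phi$ is automatically trivial, whence (\ref{ext1}) splits as an extension of categorical groups.

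For the converse, I would construct a form explicitly as the descent category. Given a splitting of (\ref{ext1}), specified by twisted auto-equivalences $\Phi_g$ together with coherence isomorphisms $J_{g,h}:\Phi_g\circ g(\Phi_h)\to\Phi_{gh}$ whose $3$-cocycle has been trivialized, define $\bar{\C}$ to have objects pairs $(X,\{u_g\}_{g\in\Gamma})$ with $X\in\C$ and $u_g:\Phi_g(X)\to X$ isomorphisms satisfying
\begin{equation*}
u_{gh}\circ J_{g,h}(X)=u_g\circ\Phi_g(u_h),\quad g,h\in\Gamma,
\end{equation*}
with morphisms being morphisms in $\C$ commuting with the $u_g$. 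When $\C$ carries additional structure (tensor, braided, etc.) and the $\Phi_g$ preserve it compatibly with $J_{g,h}$, then $\bar{\C}$ inherits the same structure canonically, so it suffices to handle the abelian case.

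To verify that $\bar{\C}$ is a genuine $K$-linear category, observe that $\Hom_{\bar{\C}}((X,u),(Y,v))$ is the $\Gamma$-fixed-point subspace of $\Hom_\C(X,Y)$ under the semilinear action $f\mapsto v_g\circ\Phi_g(f)\circ u_g^{-1}$; classical Galois descent for $L$-vector spaces exhibits it as a finite-dimensional $K$-form of the corresponding $L$-Hom-space. This computation also shows that the natural comparison functor $\Xi:\bar{\C}\otimes_K L\to\C$ is fully faithful.

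The main obstacle is essential surjectivity of $\Xi$. Given $X\in\C$, I would equip $Y:=\bigoplus_{g\in\Gamma}\Phi_g(X)$ with the tautological descent datum obtained by permuting summands via the $J_{g,h}$, thereby producing an object of $\bar{\C}$ whose image under $\Xi$ contains $X$ as a direct summand. Since $\bar{\C}$ inherits semisimplicity from $\C$ (the $\Gamma$-fixed subspaces of semisimple endomorphism algebras are again semisimple) and idempotents split—passing to the Karoubian envelope if necessary—the summand $X$ itself lifts to $\bar{\C}$, completing the proof.
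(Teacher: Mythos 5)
Your proposal is correct and follows essentially the same route as the paper: the forward direction is read off from the preceding discussion, and the converse builds the form as the category of $\Gamma$-stable (descent) objects $(X,\{u_g\})$ with $\Hom$-spaces given by $\Gamma$-invariants, proving essential surjectivity by embedding any $X$ as a summand of the induced object $\bigoplus_{g\in\Gamma}\Phi_g(X)$. You merely spell out the details (Galois descent on $\Hom$-spaces, the tautological descent datum via $J_{g,h}$) that the paper leaves as "one can show."
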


\begin{proof} The ``only if'' part is clear from the above discussion.
To prove the ``if'' part, let $\C$ be a semisimple category over
$L$, and assume that extension (\ref{ext1}) splits via $(\Phi,J)$.
In this case, define a $\Gamma-$stable object of $\C$ to be an
object $X\in \C$ together with an isomorphism $\alpha_g:
\Phi_g(X)\to X$ for each $g\in \Gamma$, such that
$$
\alpha_{gh}\circ J_{g,h}|_X=\alpha_g\circ \Phi_g(\alpha_h).
$$
(Note that simple objects may fail to admit a $\Gamma-$stable
structure since they may be non-trivially permuted by $\Gamma$.)
Given two $\Gamma-$stable objects $X,Y$, the $L-$space $\Hom(X,Y)$
has a natural action of $\Gamma$ defined by $\alpha_g$. Define the
category $\overline{\C}$ of $\Gamma-$stable objects of $\C$ with
$$
\Hom_{\overline{\C}}(X,Y)=\Hom_\C(X,Y)^\Gamma.
$$
Then there is a canonical equivalence $\overline{\C}\otimes_K L\to
\C$, so $\overline{\C}$ is a form of $\C$ over $K$. Indeed, since
$J$ satisfies the $2-$cocycle condition, one can show that any
object $Y\in \C$ is a direct summand in a $\Gamma-$stable object of
$\C$.
\end{proof}

\begin{example} (\emph{Minimal field of definition.})\label{cycgp2}
Keep the setting of Example \ref{cycgp}, with an arbitrary odd prime
$p$. We would like to determine the minimal field of definition of
the category $\C:=\Vect_{\Z/p\Z}^\omega(\mathbb{C})$. Clearly, this
category is defined over the cyclotomic field $L:=\Bbb Q(\zeta)$,
where $\zeta:=e^{2\pi i/p}$. So we would like to find the minimal
subfield $K\subseteq L$ over which this category has a form. By the
main theorem of Galois theory, such subfields $K$ correspond to
subgroups $\Gamma\subseteq \Bbb F_p^\times={\rm Gal}(L/{\Bbb Q})$,
so we are looking for the largest possible subgroup $\Gamma$. Note
that by Examples \ref{cycgp}, \ref{cycgp1}, $K$ cannot be a real
field, and hence $\Gamma$ is of odd order (as it cannot contain
$-1$, i.e., complex conjugation). So if we write $p$ as $p=2^mr+1$,
where $r$ is odd, then the largest $\Gamma$ can be is the group
$\Gamma:=\Bbb Z/r\Bbb Z=(\Bbb F_p^\times)^{2^m}$.

Let us show that this group in fact works, i.e., there is a form of
$\C$ over the corresponding field $K$ (which has degree $2^m$ over
$\Bbb Q$). To see this, note that since $r$ is odd, we have a square
root homomorphism $s: \Gamma\to \Gamma$. Now, for $g\in \Gamma$, let
$\Phi_g:{}^g\C\to \C$ be defined on objects by
$\Phi_g(X_1)=X_{s(g)}$. This can be extended to a tensor functor
(since raising to power $a$ in $\Bbb F_p$ acts on $\omega$ by
$a^{-2}$). Moreover, it is easy to see that the functors $\Phi_g$
satisfy the $1-$cocycle condition, and the obstruction $o_\Phi(\C)$
must vanish since it lies in $H^3(\Gamma,\Z/p\Z)=0$. Thus, there is
a form $\overline{\C}$ of $\C$ over $K$ (which is in fact unique
since $H^2(\Gamma,\Z/p\Z)=0$).
The simple objects of $\overline{\C}$ are $Y_0:={\bold 1}$ and
$Y_1,\dots,Y_{2^m}$ (of Frobenius-Perron dimension $r$).

We see that the initial field of definition $L$ is minimal only very rarely.
This happens if $r=1$ (i.e., the form $\overline{\C}$ is split),
which is equivalent to the condition that
$p$ is a Fermat prime, i.e., a prime of the form $2^{2^n}+1$
(there are only five such primes known, namely, 3, 5, 17, 257, and 65537).

Also, note that if $p=4k-1$ then $m=1$, and thus $\C$ is defined
over the imaginary quadratic field $\Bbb Q(\sqrt{-p})$. The
corresponding form $\overline{\C}$ has three simple objects ${{\bold
1}},X,X^*$, with $\FPdim X=\frac{p-1}{2}=2k-1$, and fusion rules
\begin{equation}\label{sk}
X\otimes X=(k-1)X\oplus kX^*
\end{equation}
and
\begin{equation*}
X\otimes X^*=X^*\otimes X=(2k-1){\bold 1}\oplus (k-1)(X\oplus X^*).
\end{equation*}

Similar analysis applies to the situation when
$\C:=\Vect_{\Z/n\Z}^\omega(\mathbb{C})$, where $\omega$ is a
generator of $H^3(\Z/n\Z,\Bbb C^\times)$, where $n>1$ is a positive
integer, not necessarily a prime. In this case, the a priori field
of definition is $L:=\Bbb Q(e^{2\pi i/n})$, with Galois group
$(\Z/n\Z)^\times$, of order $\varphi(n)$. We write
$\varphi(n)=2^mr$, where $r$ is odd. Then the minimal field of
definition $K$ is of degree $2^m$ over $\Bbb Q$, and the group
$\Gamma={\rm Gal}(L/K)$, of order $r$, is the group of elements of
odd order in $(\Z/n\Z)^\times$. So we have

\begin{proposition}
The minimal field of definition of
$\C:=\Vect_{\Z/n\Z}^\omega(\mathbb{C})$ is $L$ (i.e., $\Gamma=1$) if
and only if the regular $n-$gon can be constructed by compass and
ruler, i.e., if and only if $n$ is a Gauss number, $n=2^sq$, where
$q$ is a product of distinct Fermat primes. \qed
\end{proposition}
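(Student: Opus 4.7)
The plan is to reduce the statement to the classical Gauss--Wantzel classification of constructible regular polygons, using the analysis of $\Gamma$ already carried out in the paragraphs preceding the proposition.

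First I would unpack the condition $\Gamma=1$. By the construction immediately above the proposition, $\Gamma \subseteq (\Z/n\Z)^\times$ is the subgroup of elements of odd order, and $|\Gamma|=r$ where $\varphi(n)=2^m r$ with $r$ odd. Hence $\Gamma=1$ is equivalent to $r=1$, i.e.\ to $\varphi(n)$ being a power of $2$.

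Next I would invoke the classical characterization: $\varphi(n)$ is a power of $2$ if and only if $n = 2^s p_1 \cdots p_k$ where the $p_i$ are distinct odd primes with $\varphi(p_i)=p_i-1$ a power of $2$, i.e.\ each $p_i$ is a Fermat prime. This follows from the multiplicativity of $\varphi$ together with the observation that for an odd prime $p$ and exponent $e\ge 1$, $\varphi(p^e)=p^{e-1}(p-1)$ is a power of $2$ only when $e=1$ and $p$ is a Fermat prime. This is exactly the condition that $n$ is a Gauss number, and by the Gauss--Wantzel theorem this is precisely the condition that the regular $n$-gon can be constructed by compass and ruler.

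Combining the two steps gives the equivalence. There is no real obstacle here: the work of verifying that the minimal form over $K$ actually exists (i.e.\ the vanishing of the obstructions in $H^2$ and $H^3$) has already been done in Example~\ref{cycgp2} for $n$ prime, and the argument there goes through verbatim using the square-root homomorphism $s:\Gamma\to\Gamma$ available because $|\Gamma|=r$ is odd; the only genuinely number-theoretic input is the elementary computation of when $\varphi(n)$ is a $2$-power, which is classical.
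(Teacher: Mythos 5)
Your proposal is correct and follows essentially the same route as the paper: the proposition is stated as an immediate consequence of the preceding analysis, which identifies $\Gamma$ as the odd-order part of $(\Z/n\Z)^\times$, so that $\Gamma=1$ is equivalent to $\varphi(n)$ being a power of $2$, which by the classical Gauss--Wantzel computation is exactly the Gauss number condition. Your additional remarks on verifying the existence of the form over the minimal $K$ (via the square-root homomorphism and the vanishing of the cohomological obstructions) correctly spell out what the paper compresses into ``similar analysis applies.''
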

\end{example}

\subsection{Classification of forms.}
Let us now describe the classification of forms of $\C$ over $K$ in
the case when they do exist. For this purpose,
fix one such form, call it ${\overline{\C}}$, and
classify all the forms of $\C$ over $K$ (which in this situation are
also called \emph{twisted forms} of ${\overline{\C}}$).

As explained above, forms of $\C$ over $K$ correspond to splittings
of extension (\ref{ext1}) (i.e., homotopy classes of sections of
fibration (\ref{fib1})) together with collections of isomorphisms
$J=(J_{g,h})$. So, if we choose such a splitting $\Phi$ of the
extension of ordinary groups (which, in particular, defines an
action of $\Gamma$ on $\Aut(\C)$) then for any other splitting
$\Phi'$, we have $\Phi'_g=\Lambda_g\circ \Phi_g$, $g\in \Gamma$, for
some $1-$cocycle $\Lambda = \Lambda_{\Phi}: \Gamma\to \Aut(\C)$,
$g\mapsto \Lambda_g$, and conversely, any $1-$cocycle defines a
splitting. Moreover, two $1-$cocycles define equivalent splittings
if and only if they are in the same cohomology class.

Furthermore, it is easy to show that the obstruction $o_{\Phi'}(\C)$
is the pullback of the canonical class $o(\C)\in
H^3(\Aut_K(\C),\Aut({\rm Id}_\C))$ (defining the associativity
isomorphism in $\underline{\Aut}_K(\C)$) under the homomorphism
$\gamma_\Lambda: \Gamma\to \Aut_K(\C)$ defined by $\Lambda$.

Thus, we obtain the following proposition.

\begin{proposition}\label{stan1}
The following hold:

(i) An element $\Lambda\in Z^1(\Gamma,\Aut(\C))$
gives rise to a twisted form of $\overline{\C}$
if and only if $\gamma_\Lambda^*o(\C)=0$; this condition
depends only on the class $[\Lambda]\in H^1(\Gamma,\Aut(\C))$.

(ii) If the condition of (i) is satisfied then framed equivalence
classes of twisted forms of $\overline{\C}$ corresponding to
$\Lambda$ are parameterized by a torsor over
$H^2(\Gamma,\Aut(Id_\C))$.

(iii) Unframed equivalence classes of twisted forms correspond to
orbits of $\Aut(\C)^\Gamma=\Aut(\overline{\C})$ on the data of
(i),(ii).
\end{proposition}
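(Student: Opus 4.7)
The plan is to reduce the three parts to the obstruction analysis already carried out for the existence of a form, applied to the shifted splitting $\Phi' = \Lambda \cdot \Phi$.

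For part (i), I would begin by recording that any section of fibration (\ref{fib1}) over the $1$-skeleton has, relative to the chosen section $\Phi$, the form $\Phi'_g = \Lambda_g \circ \Phi_g$ for a function $\Lambda : \Gamma \to \Aut(\C)$, and that $\Phi'$ extends to a section over the $2$-skeleton (equivalently, $\Phi'$ is a group-theoretic splitting of (\ref{ext1})) precisely when $\Lambda$ is a $1$-cocycle for the $\Gamma$-action on $\Aut(\C)$ coming from $\Phi$. This is the same bookkeeping as in Proposition \ref{stan}. Each such splitting is the same thing as a homomorphism of ordinary groups $\gamma_\Lambda : \Gamma \to \Aut_K(\C)$ with $\psi \circ \gamma_\Lambda = \id$. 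The categorical group $\underline{\Aut}_K(\C)$ has classifying space $B\underline{\Aut}_K(\C)$, which is a $2$-type with $\pi_1 = \Aut_K(\C)$, $\pi_2 = \Aut(\Id_\C)$, and Postnikov invariant $o(\C) \in H^3(\Aut_K(\C),\Aut(\Id_\C))$ encoding the associator of $\underline{\Aut}_K(\C)$. By standard obstruction theory for maps of $2$-types, $\gamma_\Lambda$ lifts to a homomorphism of categorical groups if and only if $\gamma_\Lambda^* o(\C) = 0$ in $H^3(\Gamma,\Aut(\Id_\C))$. I would then verify that the explicit $3$-cocycle $\omega_{\Phi'}$ produced in the discussion preceding the proposition is a representative of $\gamma_\Lambda^* o(\C)$, by unwinding both cocycles as associators of the composition $\Phi'_g \circ g(\Phi'_h) \circ gh(\Phi'_k)$. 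Since cohomologous $1$-cocycles $\Lambda$ produce conjugate homomorphisms $\gamma_\Lambda$, and conjugation acts trivially on $H^3$, the vanishing condition depends only on the class $[\Lambda] \in H^1(\Gamma,\Aut(\C))$.

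For part (ii), once $\Phi'$ is fixed and $\gamma_\Lambda^* o(\C) = 0$, I would show that the set of systems $J = (J_{g,h})$ of functorial isomorphisms $\Phi'_g \circ g(\Phi'_h) \to \Phi'_{gh}$ satisfying the $2$-cocycle identity (so that the construction in the proof of the previous proposition produces a form) is a torsor over $Z^2(\Gamma,\Aut(\Id_\C))$: any two such $J, J'$ differ by a $2$-cochain with values in $\Aut(\Id_\C)$, and the $2$-cocycle identities imply this cochain is a cocycle. A straightforward comparison of the two resulting categories of $\Gamma$-stable objects shows that modifying $J$ by a $2$-coboundary produces a framed-equivalent form (the coboundary provides the required natural isomorphism $\Xi_2 \circ \Theta \cong \Xi_1$), whereas cohomologically non-trivial modifications yield inequivalent framings. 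Hence framed equivalence classes of twisted forms with prescribed $[\Lambda]$ form a torsor over $H^2(\Gamma,\Aut(\Id_\C))$.

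For part (iii), I would observe that an unframed equivalence $\Theta$ between two forms is promoted to a framed one by composing with an auto-equivalence of the target, so that two framed forms define the same unframed form exactly when related by the action of $\Aut(\overline{\C}) = \Aut(\C)^\Gamma$ on the classifying data of (i) and (ii); describing this action explicitly in terms of the natural $\Aut(\overline{\C})$-actions on $H^1(\Gamma,\Aut(\C))$ and on the torsors of (ii) finishes the proof.

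The main obstacle is the identification in part (i) of $o_{\Phi'}(\C)$ with $\gamma_\Lambda^* o(\C)$: this is the one step where one must pass from the explicit cocycle description built out of the $J_{g,h}$ to the intrinsic Postnikov invariant of $\underline{\Aut}_K(\C)$. All remaining steps are essentially bookkeeping with $1$- and $2$-cocycles already set up in the preceding subsections.
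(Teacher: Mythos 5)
Your proposal is correct and takes essentially the same route as the paper, which records exactly the same three ingredients in the surrounding discussion (the $1$-cocycle description of splittings, the identification $o_{\Phi'}(\C)=\gamma_\Lambda^*o(\C)$, and the torsor of choices of $J$ over $H^2(\Gamma,\Aut(\Id_\C))$) and then gives only the one-line torsor observation as the formal proof. Your write-up simply fills in more of the bookkeeping that the paper leaves implicit.
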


\begin{proof}
For a given $\Phi'$, choices of $J'$ up to equivalence are
parameterized by a torsor over $H^2(\Gamma,\Aut(Id_\C))$.
\end{proof}

\subsection{Forms of semisimple abelian categories.}

\begin{example} Let
$\C:=\Vect(L)$ be the abelian $L-$linear category of
finite-dimensional vector spaces over $L$. Let
$\overline{\C}:=\Vect(K)$. We have ${\rm Aut}(\C)=1$, so there is a
unique choice of $\Phi$. The obstruction $o_\Phi(\C)$ vanishes since
there is a form of $\C$ (namely, $\overline{\C}$). Therefore, the
twisted forms of $\overline{\C}$ are classified by
$H^2(\Gamma,\Aut(\Id_{\C}))=H^2(\Gamma,L^\times)$, which is the
relative Brauer group ${\rm Br}(L/K)$. Indeed, for any $a\in {\rm
Br}(L/K)$, there is a division algebra $D_a$ over $K$ with trivial
center (which splits over $L$), and the form of $\C$ corresponding
to $a$ is just the $K-$linear category of finite-dimensional left
vector spaces over $D_a$.
\end{example}

\begin{example}
Let $\C:=\Vect(L)^n$ be the direct sum of $n$ copies of the category
$\Vect(L)$. Let $\overline{\C}:=\Vect(K)^n$. We have $\Aut(\C)=S_n$,
with a trivial action of $\Gamma$. Thus, choices of $\Phi$
correspond to homomorphisms $\Phi: \Gamma\to S_n$, i.e., commutative
semisimple $K-$algebras $R$ of dimension $n$ with a splitting over
$L$. The obstruction $o_\Phi(\C)$ vanishes for all $\Phi$, since for
any $R$ we have a form $\overline{\C}_R$ of $\C$ over $K$, which is
the category of finite-dimensional $R-$modules. So all the forms
corresponding to $\Phi$ are parameterized by
$H^2(\Gamma,(L^\times)^n)$, where the action of $\Gamma$ corresponds
to $\Phi$.

The algebra $R$ is a direct sum of field extensions of $K$:
$$
R=K_1\oplus\cdots\oplus K_m.
$$
These extensions correspond to orbits of $\Gamma$ on the set
$\{1,...,n\}$. Thus it suffices to consider the case when there is
just one orbit; the general case is obtained by taking the direct
sum. In the case of one orbit, $R$ is a field extension of $K$ of
degree $n$, and $\Gamma$ acts transitively on $(L^\times)^n$. By the
Shapiro Lemma, $H^2(\Gamma,(L^\times)^n)=H^2(\Gamma_1,L^\times)$,
where $\Gamma_1$ is the stabilizer of $1\in \{1,...,n\}$. But
$\Gamma_1$ is the Galois group of $L$ over $R$, so we get that the
twisted forms are parameterized by the relative Brauer group ${\rm
Br}(L/R)$. Indeed, given $a\in {\rm Br}(L/R)$, let $D_a$ be the
corresponding division algebra with center $R$; then the form
corresponding to $R$ is the $K-$linear category of
finite-dimensional left vector spaces over $D_a$.
\end{example}

\subsection{Forms of tensor categories.}
Let us now pass to tensor categories.

\begin{example}\label{ex1}
Let $\overline{K}$ be the algebraic closure of $K$, and let
\linebreak
$\C:=\Vect_{\Z/2\Z}(\overline{K})$ be the tensor category
of $\Z/2\Z-$graded $\overline{K}-$vector spaces. Let
$\overline{\C}:=\Vect_{\Z/2\Z}(K)$. Then $\Aut(\C)=1$, so there is a
unique choice of $\Phi$, the obstruction $o_\Phi(\C)$ vanishes, and
twisted forms are classified by
$H^2(\Gamma,\Aut_{\otimes}(\Id_{\C}))=H^2(\Gamma,\Z/2\Z)$.

Let $\mu_n$ denote the group of roots of unity of order $n$ in
$\overline{K}$, regarded as a $\Gamma-$module. Recall that
$H^2(\Gamma,\mu_n)$ is the group ${\rm Br}_n(K):=\Ker (n|_{{\rm
Br}(K)})$ of $n-$torsion in the Brauer group of $K$.  Indeed, we
have a short exact sequence of $\Gamma-$modules
\begin{equation*}
1 \xrightarrow{} \mu_n\xrightarrow{}
\overline{K}^\times\xrightarrow{n} \overline{K}^\times\xrightarrow{}
1,
\end{equation*}
which yields an exact sequence
$$
H^1(\Gamma,\overline{K}^\times)\to H^2(\Gamma,\mu_n)\to
H^2(\Gamma,\overline{K}^\times)\to H^2(\Gamma,\overline{K}^\times).
$$
Since by Hilbert theorem 90, $H^1(\Gamma,\overline{K}^\times)=0$,
the claim follows.

So the forms of the tensor category $\Vect_{\Z/2\Z}(\overline{K})$
over $K$ are classified by the group $\Br_2(K)$ of elements of order
$\le 2$ in the Brauer group ${\rm Br}(K)$ (which is
$H^2(\Gamma,\Z/2\Z)$).

For example, if $K=\mathbb{R}$ then $\Gamma=\Z/2\Z$ and
$H^2(\Z/2\Z,\Z/2\Z)=\Z/2\Z$, so there is one non-trivial form. Its
simple objects are the unit object $\bf{1}$ and an object $X$
satisfying $X\otimes X=4\cdot \bf{1}$, with $\End(\bf{1})=\mathbb{R}$ and
$\End(X)=\mathbb{H}$ (the algebra of quaternions).

Similar analysis applies to the category
$\Vect_{\Z/2\Z}^{\omega}(\overline{K})$ with a non-trivial
associator (over a field of characteristic different from $2$). Its
forms over $K$ are parameterized by ${\rm Br}_2(K)$, and for $K=\Bbb
R$ there is one trivial and one non-trivial form.
\end{example}

\begin{example} Here is an example where $o_\Phi(\C)\ne 0$.

For a finite group $G$, let ${\rm Out}(G)$ denote the group of outer
automorphisms of $G$, and let $Z(G)$ be the center of $G$. Let $H$
be another finite group, and let $\phi: H\to {\rm Out}(G)$ be a
homomorphism; then $H$ acts naturally on $Z(G)$. It is well known
that there is a canonical Eilenberg-MacLane class $E\in H^3(H,Z(G))$
corresponding to $\phi$, which is not always trivial.

Indeed, here is an example, pointed out to us by David Benson. Take
$G:={\rm SL}(2,\mathbb{F}_9)$ (it is a double cover of the
alternating group $A_6={\rm PSL}(2,\mathbb{F}_9)$) and let
$H:=\mathbb{Z}/2\mathbb{Z}$. Then $Z(G)=\mathbb{Z}/2\mathbb{Z}$ and
${\rm Out}(G)={\rm Out}(A_6)=(\mathbb{Z}/2\mathbb{Z})^2$. Consider
the map $\phi: H\to {\rm Out}(G)$ for which the corresponding
extension of $\mathbb{Z}/2\mathbb{Z}$ by $A_6$ is the Mathieu group
$M_{10}$ (i.e., the image of the non-trivial element of
$\mathbb{Z}/2\mathbb{Z}$ under $\phi$ is the outer automorphism
defined by the composition of conjugation by a matrix whose
determinant is a non-square with the Frobenius automorphism). Then
it is easy to show that there is no extension of
$\mathbb{Z}/2\mathbb{Z}$ by ${\rm SL}(2,\mathbb{F}_9)$ implementing
$\phi$, so the Eilenberg-MacLane class $E\in
H^3(\mathbb{Z}/2\mathbb{Z},\mathbb{Z}/2\mathbb{Z})=
\mathbb{Z}/2\mathbb{Z}$ is non-trivial.

Let $L:=\overline{K}$, let $G$ be a finite group as above, and let
$\C:=\Rep_{\overline{K}}(G)$ be the tensor category of
representations of $G$. In this case, we have a natural homomorphism
$\zeta: {\rm Aut}(G)\to \Aut(\C)$ which factors through ${\rm
Out}(G)$ and lands in $\Aut(\C)^\Gamma$. So any homomorphism $\eta:
\Gamma\to {\rm Out}(G)$ gives rise to a homomorphism $\Gamma\to
\Aut(\C)^\Gamma\subseteq \Aut(\C)$ (which is also a $1-$cocycle),
and hence to a homomorphism $\Phi: \Gamma\to \Aut_K(\C)$. Also, we
have $\Aut_\otimes(\Id_\C)=Z(G)$. Thus, $o_\Phi(\C)\in
H^3(\Gamma,Z(G))$, and one can show that $o_\Phi(\C)=\eta^*(E)$. For
any $G$ and $H\subset {\rm Out}(G)$,
one can find $K$, $L$ and $\eta$
such that the image of $\eta$ is $H$, which gives a desired example.

Note that if $o_\Phi(\C)=0$ then forms obtained from $\Phi$ are
parameterized by a torsor over $H^2(\Gamma,Z(G))$. Also note that if
$\eta$ factors through ${\rm Aut}(G)$ then $o_\Phi(\C)=0$ and
moreover the torsor parameterizing forms is canonically trivial. The
point $0$ of this torsor corresponds to the form of the (algebraic)
group $G$ defined by $\eta$.
\end{example}

\begin{example} Let $\C$ be a split semisimple
tensor category over $L$, and let
$\overline{\C}$ be a form of $\C$ over $K$. Let us say that a
twisted form of $\overline{\C}$ is {\it quasi-trivial} if the
corresponding $1-$cocycle $\Lambda$ is trivial. It follows from the
above that quasi-trivial forms (up to framed equivalence) are
classified by $H^2(\Gamma,\Aut_\otimes(\Id_\C))$. Let us compute
this group in the case $L=\overline{K}$. Let $U_\C$ be the universal
grading group of $\C$ (\cite{GN}), i.e., the group such that $\C$ is
$U_\C-$graded, and any faithful grading of $\C$ comes from a
quotient of $U_\C$ (for example, if $\C:=\Vect_G(L)$, then
$U_\C=G$). Then
$\Aut_\otimes(\Id_\C)=\Hom(U_\C,\overline{K}^\times)$ as a
$\Gamma-$module. So if $(U_\C)_{\rm ab}=\bigoplus_{j=1}^N\Bbb
Z/n_j\Bbb Z$, then by the above discussion we get that quasi-trivial
twisted forms of $\overline{\C}$ are parameterized by
$$
\bigoplus_{j=1}^N {\rm Br}_{n_j}(K).
$$
For instance, for Tambara-Yamagami categories \cite{TY},
$U_\C=\Z/2\Z$, so quasi-trivial forms are parameterized by ${\rm
Br}_2(K)$. This is a generalization of Example \ref{ex1}.
\end{example}

\subsection{Split forms.}
It is an interesting question whether a given split semisimple
tensor category $\C$ over $L$ has a split form over a given subfield
$K\subseteq L$ (see \cite[Section 2.1]{MoS} for a discussion of this
question). In particular, if $G$ is a finite group, it is an
interesting question to determine a minimal number field $K$ over
which there is a split form of the category $\Rep(G)$ of
representations of $G$; e.g., for which $G$ can we take $K=\Bbb Q$?

We note that, as pointed out in \cite[Section 2.1]{MoS}, the
irrationality of characters of $G$ does not imply that $\Rep(G)$ has
no split form over $\Bbb Q$: e.g., if $G$ is abelian then
$\Rep(G)=\Vect_{G^\vee}$ and hence has a split form over $\Bbb Q$.
However, if the action of the group ${\rm
Gal}(\overline{\Bbb Q}/\Bbb Q)$ on irreducible representations of
$G$ does not come from outer automorphisms of $G$, then the minimal
field $K$ must be larger than $\Bbb Q$. This happens for most finite
simple groups of Lie type, since the outer automorphism groups of
these groups are very small, while Galois orbits of representations
increase with the corresponding prime $p$.

\subsection{Forms of braided and symmetric categories.} The theory of
forms of braided and symmetric tensor categories $\C$ is completely
parallel to the theory of forms of usual tensor categories. The only
change is that the categorical group of tensor auto-equivalences of
$\C$ needs to be replaced by the categorical group of braided
(respectively, symmetric) auto-equivalences. This group is a
subgroup of all auto-equivalences at the level of $\pi_1$, and has
the same $\pi_2$ (which is the group of tensor automorphisms of the
identity functor).

As an example consider forms of the nondegenerate braided categories
$\C:=\Vect_{\Z/p\Z}$ where $p$ is an odd prime (see \cite{DGNO}). It
is well known that braidings on this category correspond to
quadratic forms on $\Z/p\Z$. Suppose we are given such a form
$\beta$, which is nondegenerate. Then $\C$ is defined and split over
the cyclotomic field $L:=\Bbb Q(\zeta)$, $\zeta:=e^{2\pi i/p}$, and
one can ask what is the minimal field of definition $K$. Since the
braiding is defined by a quadratic form, the answer is the same as
in Example 3.6: we have to write $p-1$ as $2^mr$, where $r$ is odd,
and $K$ is the fixed field of the subgroup $\Z/r\Z\subseteq {\Bbb
F}_p^\times$ (of degree $2^m$). So we have that $K=L$ if $r=1$,
i.e., if $p$ is a Fermat prime, and $K=\Bbb Q(\sqrt{-p})$ if
$p=4k-1$.

\section{Categorification of weak unital based rings}

\subsection{Definition of weak unital based rings.}
\begin{definition} A {\it weak unital based ring} is a ring $R$
with $\Bbb Z-$basis $b_i$, $i\in I$, containing the unit element
$1$, whose structure constants $N_{ij}^k$ (defined by $b_ib_j=\sum_k
N_{ij}^kb_k$) are non-negative integers, with an involution $*: I\to
I$ defining an anti-involution of $R$, such that the coefficient of
$1$ in $b_ib_j$ is zero if $i\ne j^*$ and positive if $i=j^*$. A
{\it weak fusion ring} is a weak unital based ring of finite rank. A
{\it unital based ring} (respectively, {\it fusion ring}) is a weak
unital based ring (respectively, weak fusion ring) such that the
coefficient of $1$ in $b_ib_{i^*}$ equals $1$.
\end{definition}

It is well known that the Grothendieck ring of a semisimple rigid
tensor category (respectively, fusion category) over an
algebraically closed field $K$ is a unital based ring (respectively,
a fusion ring) (see e.g. \cite{ENO1}). Similarly, we have the
following proposition.

\begin{proposition}
The Grothendieck ring of a semisimple rigid tensor category
(respectively, fusion category) over a general (perfect) field $K$
is a weak unital based ring (respectively, a weak fusion ring).
\end{proposition}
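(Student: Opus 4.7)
The plan is to describe the Grothendieck ring concretely using a choice of simples and then verify each axiom of a weak unital based ring in turn, the only non-formal inputs being Schur's lemma over a general field together with rigidity. Let $\{X_i\}_{i\in I}$ be representatives of the isomorphism classes of simple objects of $\C$, with $X_0:={\bf 1}$, and set $b_i:=[X_i]$; by semisimplicity these form a $\Z$-basis of $R:=K_0(\C)$. By Schur's lemma, each $D_i:=\End_\C(X_i)$ is a finite-dimensional division $K$-algebra, with $D_0=K$ by the standing assumption that the unit is absolutely simple.

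First I would write $X_i\ot X_j\cong\bigoplus_k X_k^{\oplus N_{ij}^k}$ with $N_{ij}^k=\dim_{D_k}\Hom_\C(X_k,X_i\ot X_j)$ (the Hom-space being a right $D_k$-module via pre-composition); this is manifestly a non-negative integer, and is zero for all but finitely many $k$ (automatically so if $\C$ is fusion). Then, using rigidity, I would define the involution $i\mapsto i^*$ by $X_{i^*}\cong X_i^*$, which is well-defined since $X^{**}\cong X$ on simples. The natural isomorphism $(X_i\ot X_j)^*\cong X_j^*\ot X_i^*$ extends to an anti-involution $r\mapsto r^*$ of $R$.

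The heart of the argument is the axiom on the coefficient $N_{ij}^0$ of ${\bf 1}$ in $b_ib_j$. Rigidity gives a natural isomorphism
$$
\Hom_\C({\bf 1},\,X_i\ot X_j)\;\cong\;\Hom_\C(X_j^*,X_i)\;=\;\Hom_\C(X_{i^*}^{\,},X_i)\text{ when }i=j^*,
$$
and Schur's lemma identifies the right-hand side with $0$ when $i\ne j^*$ and with $D_i$ when $i=j^*$. Since $\End({\bf 1})=K$, one concludes
$$
N_{ij}^0=\begin{cases} 0, & i\ne j^*,\\ \dim_K D_i\ge 1, & i=j^*,\end{cases}
$$
which is precisely the defining condition of a weak unital based ring. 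In the fusion case $|I|<\infty$, so $R$ has finite rank and is a weak fusion ring.

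I do not anticipate any genuine obstacle: the whole argument is a direct verification of axioms from semisimplicity, rigidity and Schur's lemma. The one conceptual point — and the reason for the qualifier ``weak'' — is that over a non-algebraically closed $K$ the division algebra $D_i$ may strictly contain $K$, so $N_{ij^*}^0=\dim_K D_i$ may exceed $1$; this is exactly what distinguishes a weak (unital based, resp.\ fusion) ring from an ordinary one. The perfectness hypothesis on $K$ plays no role in this particular proposition.
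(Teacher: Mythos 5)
Your argument is correct and follows essentially the same route as the paper's (very terse) proof: all axioms are formal except the condition on the coefficient of $\bold 1$ in $b_ib_j$, which you derive, as the paper does, from Schur's lemma combined with the duality adjunction $\Hom({\bf 1},X_i\ot X_j)\cong\Hom(X_j^*,X_i)$. Your explicit identification of $N_{ij^*}^0$ with $\dim_K\End(X_i)$ is exactly the point that forces the qualifier ``weak,'' so the proposal matches the paper's intent with more detail spelled out.
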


\begin{proof} The properties of a weak unital based ring are obvious,
except for the property of the coefficient of $1$ in $b_ib_j$,
which follows from Schur's lemma.
\end{proof}

This gives rise to the problem of categorification of weak unital
based rings, and in particular weak fusion rings, i.e., finding a
rigid tensor category whose Grothendieck ring is a given weak unital
based ring. This problem is discussed in the following subsection.

\subsection{Categorification of weak fusion rings.}
Let us now discuss the classification problem of categorifications
of given weak fusion rings.

\subsubsection{The rings $R_m$.}\label{rm1}
We start by considering the simplest non-trivial weak fusion rings
$R_m$, with basis $\bold 1$ and $X$, and fusion rules $$X^2=m\bold
1,\,X^*=X,$$ where $m$ is a positive integer.

Recall that the categorifications of $R_1$ over an algebraically
closed field $K$ are $\Vect_{\Z/2\Z}(K)$, and also
$\Vect_{\Z/2\Z}^\omega(K)$ if ${\rm char}(K)\ne 2$ (where $\omega$
is the non-trivial element of $H^3(\Z/2\Z,K^{\times})=\Z/2\Z$).

The classification of categorifications of $R_m$ over any perfect
field $K$ of characteristic $\ne 2$ is given by the following
theorem.

\begin{theorem}\label{prime2}
Let $K$ be a perfect field of characteristic $\ne 2$.

(i) Categorifications $\D_Q^\pm$ of $R_m$ over $K$ are parameterized
by a central division algebra $Q$ over $K$ of dimension $m$ such
that $Q=Q^{op}$, and a choice of sign.

(ii) Categorifications $\D_Q^\pm$ of $R_4$ over $K$ are
parameterized by a choice of a quaternion division algebra $Q$ over
$K$ (i.e. a division algebra $Q_{a,b}$ with generators $x,y$ and
relations $xy=-yx, x^2=a, y^2=b$, for $a,b\in K$), as well as a
choice of sign.

(iii) If $R_m$ admits a categorification over $K$ then $m=4^n$ for
some non-negative integer $n$.

(iv) Any categorification $\D$ of $R_{4^n}$ over $K$ is a
subcategory in a category of the form $\D_{Q_1}^{\pm}\boxtimes
\D_{Q_2}^+\boxtimes\cdots\boxtimes \D_{Q_N}^+$, 
where $Q_i$ are quaternion division algebras.

(v) If $K$ is a number field or $K=\Bbb R$ then $R_m$ is
categorifiable over $K$ if and only if $m=1$ or $m=4$.

(vi) $R_{4^n}$ is categorifiable over $K:=\Bbb C(a_1,...,a_n,b_1,...,b_n)$.
\end{theorem}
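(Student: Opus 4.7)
The plan is to interpret any categorification $\D$ of $R_m$ as a twisted form over $K$ of one of the two split categories $\Vect_{\Z/2\Z}(\overline{K})$ or $\Vect_{\Z/2\Z}^{\omega}(\overline{K})$ (with $\omega$ the non-trivial $3$-cocycle), whose forms over $K$ are classified in Example \ref{ex1} by $\Br_2(K)$. This reduces the whole theorem to standard facts about $2$-torsion in Brauer groups. Concretely, let $X$ denote the non-unit simple object of $\D$ and set $Q:=\End_\D(X)$. Counting the multiplicity of $\be$ in $X\ot X^*$ gives $\dim_KQ=m$; the commuting embeddings $Q,Q^\op\hookrightarrow\End(X\ot X^*)=M_m(K)$ force $Z(Q)\subseteq K$, so $Q$ is central simple; and the self-duality $X\cong X^*$ induces $Q\cong Q^\op$, placing $[Q]$ in $\Br_2(K)$. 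Base-changing to $\overline{K}$ splits $X$ into $\sqrt{m}$ copies of a simple $\overline{X}$ with $\overline{X}\ot\overline{X}=\be$, so $\D\ot_K\overline{K}\simeq\Vect_{\Z/2\Z}^{\omega_\epsilon}(\overline{K})$ for a sign $\epsilon\in\{+,-\}$ recording whether the induced associator on $\overline{X}$ is trivial. Conversely, each pair $(Q,\epsilon)$ determines $\D_Q^{\epsilon}$ as the twisted form corresponding to $[Q]\in H^2(\Gamma,\mu_2)=\Br_2(K)$, yielding (i); part (ii) is the case $m=4$, where $4$-dimensional central division algebras are precisely quaternion algebras.

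For (iii), the index and exponent of any Brauer class share prime divisors, so the Schur index of $[Q]\in\Br_2(K)$ is a power of $2$ and hence $m=\dim_KQ=4^n$. For (iv), Merkurjev's theorem gives a factorization $[Q]=[Q_1]\cdots[Q_N]$ with each $Q_i$ quaternion, so $Q_1\ot\cdots\ot Q_N\cong M_r(Q)$ for some $r\geq 1$. In the Deligne product $\D_{Q_1}^{\epsilon}\boxtimes\D_{Q_2}^+\boxtimes\cdots\boxtimes\D_{Q_N}^+$ (the signs multiply in $\Z/2\Z$, so the total sign $\epsilon$ may be placed on a single factor), the simple object $X_1\boxtimes\cdots\boxtimes X_N$ has endomorphism algebra $M_r(Q)$ and thus splits as $r$ copies of a simple $Y$ with $\End(Y)=Q$; the tensor subcategory it generates is equivalent to $\D_Q^\epsilon$.

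Parts (v) and (vi) now follow from classical Brauer-group facts. For (v), $\Br(\Bbb R)=\Z/2\Z$ is generated by $\mathbb H$, and over a number field the Albert--Brauer--Hasse--Noether theorem gives index equal to exponent; in both settings the $2$-torsion division algebras have dimension $1$ or $4$, so $m\in\{1,4\}$, both realized (by $\Vect_{\Z/2\Z}$ and $\D_{\mathbb H}^\pm$ over $\Bbb R$, and by suitable quaternion algebras over number fields). For (vi), set $Q:=Q_{a_1,b_1}\ot\cdots\ot Q_{a_n,b_n}$ over $K:=\Bbb C(a_1,\ldots,a_n,b_1,\ldots,b_n)$; a standard generic-division-algebra specialization argument shows $Q$ is a division algebra of dimension $4^n$, so $\D_Q^+$ categorifies $R_{4^n}$.

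The main obstacle is part (iv): upgrading the abstract identity $[Q]=\prod[Q_i]$ in $\Br(K)$ to a genuine \emph{tensor} subcategory embedding, not just a linear one with matching fusion rules. This requires intrinsically characterizing $\epsilon(\D_Q^\epsilon)$ as a tensor-categorical invariant, establishing its multiplicativity under Deligne products and its invariance under Morita equivalence of the underlying algebras, and then identifying the associator induced on the subcategory generated by $Y$ with the one descended from $\Vect_{\Z/2\Z}^{\omega_\epsilon}(\overline{K})$. A closely related subtlety already appears in (i), where one must check that $\epsilon$ is well-defined independently of the particular splitting $X\ot_K\overline{K}\simeq\sqrt{m}\cdot\overline{X}$.
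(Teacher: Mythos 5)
Your proposal follows essentially the same route as the paper: identify $\C:=\D\otimes_K\overline{K}$ with $\Vect_{\Z/2\Z}^{\omega_\epsilon}(\overline{K})$, classify $\D$ as a form via $Q=\End(X)\in{\rm Br}_2(K)$ with $\dim Q=m$, and then invoke Brauer's index--exponent theorem for (iii), Merkurjev for (iv), global class field theory for (v), and a generic tensor product of quaternion algebras for (vi). The extra care you flag around (i) and (iv) (well-definedness of $\epsilon$ and upgrading the Brauer-class factorization to a tensor subcategory) is legitimate but is exactly what the paper's forms machinery (Proposition \ref{stan1} and Example \ref{ex1}) is set up to handle, so there is no substantive divergence.
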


\begin{proof}
Let $\overline{\C}$ be a categorification of $R_m$, and let
$\C:=\overline{\C}\otimes_K{\overline{K}}$. Then $\Gamma:={\rm
Gal}(\overline{K}/K)$ acts by automorphisms of the Grothendieck ring
${\rm Gr}(\C)$ as a unital based ring, and $X\in \overline{\C}$
decomposes in $\C$ as $(\bigoplus_{Z\in O}Z)^{\ell}$, where $O$ is
the $\Gamma-$orbit of simple objects corresponding to $X$, and
${\ell}$ is a positive integer. Since $X\otimes X=m\bold 1$, the
orbit $O$ consists of a single element $Z$, and $Z$ is invertible,
with $Z\otimes Z=\bold 1$. So we see that $m={\ell}^2$, and
$\C=\Vect_{\Z/2\Z}(\overline{K})$ or
$\C=\Vect_{\Z/2\Z}^\omega(\overline{K})$. Thus, $\overline{\C}$ is a
form over $K$ of one of these two categories, so it is determined by
a choice of sign \linebreak ($+$ in the first case and $-$ in the
second case) as well as a central division algebra $Q\in {\rm
Br}_2(K)$ over $K$, namely, $Q=\End(X)$. So we see that $\dim
Q={\ell}^2$. This implies (i) and (ii), since in the later case we
have $m=4$, so $Q$ is a quaternion division algebra.

Statement (iii) follows from the following theorem of Brauer.

\begin{theorem}\label{bt} (see \cite{GS}) The dimension of a central
division algebra over $K$ and its order in the Brauer group ${\rm
Br}(K)$ have the same prime factors.
\end{theorem}

Statement (iv) follows from the following  theorem of
Merkurjev.

\begin{theorem} \cite{M}
Any element of order $2$ in ${\rm Br}(K)$ is
represented by a tensor product of quaternion algebras over $K$.
\end{theorem}

Statement (v) follows from a well known result of global class field
theory (see \cite[p.105]{AT68}, \cite[Theorem 3.6]{E}) saying that
any element of order $2$ in $\Br(K)$ for fields $K$ as in (v) is
represented by a quaternion algebra (i.e., taking the tensor product
is not necessary).

Finally, to prove (vi), we
take the category corresponding to the division algebra
$Q:=\otimes_{i=1}^n Q_{a_i,b_i}$ over $K$, which is a division
algebra of dimension $4^n$.
\end{proof}

\subsubsection{The rings $R_{p,r}$.}
Theorem \ref{prime2} can be generalized to the setting involving the
$p-$torsion in the Brauer group for primes $p>2$. Namely, for any
prime $p$ define the weak fusion ring $R_{p,r}$ with basis $\bold 1$
and $X_i$, $i\in \Bbb F_p^\times$, and relations
$$
X_i^*=X_{-i},\ X_iX_j=rX_{i+j}\text{ for }i+j\ne 0,\
X_iX_{-i}=r^2\bold 1.
$$
Thus, $R_{2,r}=R_{r^2}$. Then we have the following result.

\begin{theorem}\label{primep}
Let $K$ be a perfect field which contains a primitive $p-$th root of
unity $\zeta$ (so in particular ${\rm char}K\ne p$).

(i) Categorifications $\D_Q^\omega$ of $R_{p,r}$ over $K$ are
parameterized by a central division algebra $Q$ over $K$ of
dimension $r^2$ such that $Q^{\otimes p}={\rm Mat}_{r^p}(K)$ and a
choice of
 $\omega\in H^3(\Z/p\Z,\overline{K}^\times)=\Z/p\Z$.

(ii) Categorifications $\D_Q^\omega$ of $R_{p,p}$ over $K$ are
parameterized by a choice of a cyclic division algebra $Q$ over $K$
(i.e., an algebra $Q_{a,b,p}$ with generators $x,y$ and relations
$xy=\zeta yx, x^p=a, y^p=b$, for $a,b\in K$), as well as a choice of
$\omega\in H^3(\Z/p\Z,\overline{K}^\times)=\Z/p\Z$.

(iii) If $R_{p,r}$ admits a categorification over $K$ then $r=p^n$
for some non-negative integer $n$.

(iv) Any categorification $\D$ of $R_{p,p^n}$ over $K$ is a
subcategory in a category of the form $\D_{Q_1}^\omega\boxtimes
\D_{Q_2}^1\boxtimes\cdots\boxtimes \D_{Q_N}^1$, where
$Q_i$ are cyclic algebras of dimension $p^2$.

(v) If $K$ is a number field then $R_{p,r}$ is
categorifiable over $K$ if and only if $r=1$ or $r=p$.

(vi) $R_{p,p^n}$ is categorifiable over $K:=\Bbb C(a_1,...,a_n,b_1,...,b_n)$.
\end{theorem}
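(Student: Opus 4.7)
The plan is to adapt the proof of Theorem \ref{prime2}, replacing $\Z/2\Z$ throughout by $\Z/p\Z$ and Merkurjev's theorem by the Merkurjev--Suslin theorem. Let $\overline{\C}$ be a categorification of $R_{p,r}$ over $K$ and set $\C := \overline{\C} \otimes_K \overline{K}$, with Galois group $\Gamma := \mathrm{Gal}(\overline{K}/K)$ acting on $\mathrm{Gr}(\C)$ as a unital based ring. Each basis element $X_i$ of $R_{p,r}$ decomposes in $\C$ as $\ell_i \cdot \sum_{Z \in O_i} Z$ for some $\Gamma$-orbit $O_i$ of simple objects and some multiplicity $\ell_i \ge 1$.

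The crucial first step is to identify $\C$. From the relation $X_i \otimes X_{-i} = r^2 \mathbf{1}$ and the non-negativity of fusion coefficients, I would deduce that $|O_i| = 1$ and that each simple $Z_i$ is invertible: otherwise some off-diagonal product $Z \otimes W^*$ with distinct $Z, W \in O_i$, or a non-unit summand in $Z \otimes Z^*$, would contribute non-unit simples to $X_i \otimes X_{-i}$ with no way to cancel. The relation $X_i \otimes X_j = r X_{i+j}$ then yields $Z_i = Z_1^i$ together with the recursion $\ell_{i+1} = \ell_i \ell_1 / r$; combined with $\ell_1 \ell_{-1} = r^2$ this forces $\ell_1^p = r^p$, whence $\ell_i = r$ for all $i$. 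Consequently every simple of $\C$ arises as some $Z_i$, and $\C \cong \Vect^\omega_{\Z/p\Z}(\overline{K})$ for some $\omega \in H^3(\Z/p\Z, \overline{K}^\times) \cong \Z/p\Z$, with $\Gamma$ fixing each simple.

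Because $\Gamma$ acts trivially on the simples of $\C$, Proposition \ref{stan1} combined with the Kummer / Hilbert 90 computation of Example \ref{ex1} (valid since $\zeta \in K$) shows that the framed forms of each such $\C$ over $K$ are parameterized by $H^2(\Gamma, \mu_p) = \Br_p(K)$. Allowing $\omega$ to vary and setting $Q := \End(X_1) \in \Br_p(K)$, which is a division algebra of dimension $r^2$ satisfying $Q^{\otimes p} = \mathrm{Mat}_{r^p}(K)$, gives (i). For $r = p$ the degree of $Q$ equals its exponent, so $Q$ is a cyclic algebra $Q_{a,b,p}$ by the standard presentation of degree-$p$ symbol algebras when $\zeta \in K$, proving (ii). Parts (iii) and (iv) then follow, exactly as in Theorem \ref{prime2}, from Theorem \ref{bt} (dimension and order in $\Br(K)$ share the same prime factors) and from the Merkurjev--Suslin theorem (every element of $\Br_p(K)$ is a tensor product of cyclic algebras of degree $p$). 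Part (v) uses that Albert--Brauer--Hasse--Noether forces index to equal exponent in the $p$-primary part of the Brauer group of a number field, so $r \in \{1, p\}$. Part (vi) is explicit: $Q := \bigotimes_{i=1}^n Q_{a_i, b_i, p}$ is a division algebra of dimension $p^{2n}$ over $K := \Bbb C(a_1, \dots, a_n, b_1, \dots, b_n)$, as verified by iterated residue maps along the independent indeterminates.

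The main obstacle is the identification of $\C$ in the second paragraph: proving that no non-invertible simples appear and that the multiplicities $\ell_i$ equal $r$ uniformly. This combinatorial step is where the cyclic fusion rules of $R_{p,r}$ are essentially used, and it is somewhat delicate because one must rule out non-trivial $\Gamma$-orbits as well as potential contributions from simples outside the span of the $Z_i$, using only positivity and the given fusion relations.
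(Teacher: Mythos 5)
Your argument follows the paper's proof essentially step for step: the same decomposition $X_i=\big(\bigoplus_{Z\in O_i}Z\big)^{\ell_i}$, the same use of $X_i\otimes X_{-i}=r^2{\bold 1}$ to force $|O_i|=1$ and invertibility of the $Z_i$, the identification $\C\cong \Vect_{\Z/p\Z}^\omega(\overline K)$ followed by the descent-theoretic parameterization of forms by $\omega$ together with $Q=\End(X_1)\in {\rm Br}_p(K)$, and the same external inputs for (iii)--(vi) (Brauer's index--exponent theorem, Merkurjev--Suslin, global class field theory, and the explicit tensor product of symbol algebras over $\Bbb C(a_1,\dots,a_n,b_1,\dots,b_n)$). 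Your derivation of $\ell_i=r$ via the recursion $\ell_{i+1}=\ell_i\ell_1/r$ is more roundabout than the paper's (which just combines $\ell_1=\ell_{-1}$, coming from $X^*=X_{-1}$, with $\ell_1\ell_{-1}=r^2$), but it is correct, and your justification that no non-invertible simples can occur is actually spelled out more carefully than in the paper.

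The one place where you genuinely diverge is (ii), and there your stated reason is a non sequitur: for a division algebra of prime degree $p$ the equality of degree and exponent is automatic (the exponent divides the index and exceeds $1$), and it does not imply cyclicity. Whether every division algebra of prime degree $p$ over a field containing $\mu_p$ is a symbol algebra is exactly the cyclicity problem, which is a theorem only for $p=2,3$ and is open for $p\ge 5$ over general fields. The paper instead invokes the Albert--Brauer--Hasse--Noether theorem at this point; that citation is itself literally a statement about global fields, so (ii) should really be read in the number-field setting of (v) (or for small $p$), but in any case ``degree equals exponent'' cannot serve as the justification, and you should replace it by an appeal to ABHN (restricting $K$) or to Wedderburn's theorem when $p\le 3$.
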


\begin{proof}
The proof is parallel to the proof of Theorem \ref{prime2}. Let
$\overline{\C}$ be a categorification of $R_{r,p}$, and let
$\C:=\overline{\C}\otimes_K{\overline{K}}$. Then $\Gamma:={\rm
Gal}(\overline{K}/K)$ acts by automorphisms of ${\rm Gr}(\C)$, and
$X:=X_1\in \overline{\C}$ decomposes in $\C$ as $(\bigoplus_{Z\in
O}Z)^{\ell}$, where $O$ is the $\Gamma-$orbit of simple objects
corresponding to $X$, and $\ell$ is a positive integer. Since
$X\otimes X^*=r^2\bold 1$, the orbit $O$ consists of a single
element $Z$, and $Z$ is invertible, with $Z\otimes Z^*=\bold 1$. So
we see that $\ell=r$, and $\C=\Vect_{\Z/p\Z}^\omega(\overline{K})$
for some $\omega\in H^3(\Z/p\Z,\overline{K}^\times)=\Z/p\Z$. Thus,
$\overline{\C}$ is a form of one of these categories over $K$, so it
is determined by a choice of $\omega$ as well as a central division
algebra $Q\in {\rm Br}_p(K)$ over $K$, namely, $Q=\End(X)$. So we
see that $\dim Q=r^2$. This implies (i).

To prove (ii), note that $Q$ is a central division algebra of
dimension $p^2$ over $K$, so by the Albert-Brauer-Hasse-Noether theorem
\cite{BHN}, \cite{AH}, it is a cyclic algebra.

Statement (iii) follows from Brauer's theorem (Theorem \ref{bt}).

Statement (iv) follows from the  theorem of
Merkurjev and Suslin:

\begin{theorem}(\cite{MeS}) Any element of order $p$ in ${\rm
Br}(K)$ is represented by a tensor product of cyclic division
algebras of dimension $p^2$ over $K$.
\end{theorem}

Statement (v) follows from a well known result of global class field
theory (see \cite[p.105]{AT68}, \cite[Theorem 3.6]{E}) saying that
any element of order $p$ in $\Br(K)$ for fields $K$ as in (v) is
represented by a cyclic algebra of dimension $p^2$ (i.e., taking the
tensor product is not necessary).

Finally, to prove (vi), we take the category corresponding to the
division algebra $Q:=\otimes_{i=1}^n Q_{a_i,b_i,p}$ over $K$, which
is a division algebra of dimension $p^{2n}$.
\end{proof}

\subsubsection{The rings $S_k$}\label{sk1}
For a positive integer $k$ define the weak fusion ring $S_k$
with basis $\bold 1,X$ and relations
$$
X^2=k\bold 1+(k-1)X,\, X^*=X.
$$
Let us classify categorifications $\overline{\C}$ of this ring over
a field $K$, say, of characteristic zero. We have $\FPdim(X)=k$.
Passing to the algebraic closure, we get $X=(\bigoplus_{i\in
O}X_i)^{\ell}$, where $O$ is a Galois group orbit. Now, it is clear
that $\FPdim(X_i)=1$ (otherwise we would not be able to write a
decomposition for $X_i\otimes X_i^*$), so we get $k=\ell |O|$. Also,
from the equation for $XX^*=X^2$ we get $\ell^2|O|=k$, which implies
that $\ell=1$, $|O|=k$. So $\C:=\overline{\C}\otimes_K \overline{K}$
is the category $\Vect_G^\omega(\overline{K})$ for some group $G$
and $3-$cocycle $\omega\in H^3(G,\overline{K}^\times)$.

The absolute Galois group $\Gamma$ of $K$ acts on $G$ and has two
orbits, namely $\lbrace{1\rbrace}$ and $O$. This means that all
elements of $G$ other than $1$ have the same order, which then has
to be a prime $p$, such that $k+1=p^n$ for some positive integer
$n$. Moreover, if $G$ were non-abelian then the Galois group would
have to preserve its non-trivial proper subgroup (the center), which
is impossible because we have just two orbits. Thus $G$ must be
abelian, hence a vector space $V$ over $\Bbb F_p$, and we have a
homomorphism $\Phi: \Gamma\to \GL(V)$ such that $\Gamma$ acts
transitively on non-zero vectors, i.e., the image
$\overline{\Gamma}$ of $\Gamma$ in $\GL(V)$ is a transitive finite
linear group.

On the other hand, if we have any surjective homomorphism \linebreak
$\Phi: \Gamma\to \GL(V)$ then for $\omega=1$ we obtain a
categorification of $S_k$ over an appropriate field $K$ (the
category of representations of the twisted form of the algebraic
group $V^*$ defined by the homomorphism $\Phi$). Thus, we obtain the
following proposition, which is somewhat similar to \cite[Corollary
7.4]{EGO}.

\begin{proposition}
The weak fusion ring $S_k$ is categorifiable over a suitable field
if and only if $k+1$ is a prime power. \qed
\end{proposition}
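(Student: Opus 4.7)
My plan is to verify the two implications separately, with most of the work for the forward direction already done in the paragraphs preceding the statement.

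For the ``only if'' direction, I would simply package the preceding analysis. Any categorification $\overline{\C}$ of $S_k$ over $K$ extends via $\otimes_K \overline{K}$ to $\C=\Vect_G^\omega(\overline{K})$ for some finite group $G$ with $|G|=k+1$, on which the absolute Galois group $\Gamma$ acts with exactly two orbits: $\{1\}$ and a single orbit on $G\setminus\{1\}$. Consequently all non-identity elements of $G$ share a common order, which must be a prime $p$; a group of exponent $p$ is a $p$-group, so $k+1=|G|=p^n$ for some $n\ge 1$.

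For the converse I plan to exhibit an explicit categorification when $k+1=p^n$. Set $V:=\Bbb F_p^n$, note that the natural action of $\GL(V)$ on $V\setminus\{0\}$ is transitive, and choose a field $K$ whose absolute Galois group $\Gamma$ admits a surjection $\Phi:\Gamma\to\GL(V)$---any field realizing $\GL(V)$ as a finite Galois group works, for instance a suitable fixed field of $\GL(V)$ acting on a rational function field. Identifying $\GL(V)$ with its image in $\Aut_\otimes(\Vect_V(\overline{K}))$, I would apply Proposition \ref{stan1} to $\Phi$ together with the trivial cocycle $\omega=1$ to obtain a twisted form $\overline{\C}$ of $\Vect_V(\overline{K})$ over $K$; equivalently, $\overline{\C}$ is the category of representations of the $K$-form of the constant algebraic group $V^*$ defined by $\Phi$. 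The two $\Gamma$-orbits on simple objects of $\Vect_V(\overline{K})$ then equip $\overline{\C}$ with exactly two simple objects $\bold 1$ and $X$, with $\FPdim(X)=|V\setminus\{0\}|=k$.

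The only remaining step is to match the fusion rules of $\overline{\C}$ with those of $S_k$, and I expect this to reduce to an orbit count in $\Bbb Z[V]$: writing $S:=\sum_{v\in V\setminus\{0\}}v$, one has $S\cdot S=k\cdot 0+(k-1)\cdot S$, since each $u\ne 0$ admits $k-1$ representations $u=v+w$ with $v,w\ne 0$ while $0$ arises $k$ times. This yields $X\otimes X=k\bold 1\oplus(k-1)X$, while the symmetry $v\mapsto -v$ on $V\setminus\{0\}$ gives $X^*=X$. The only point that might look like a genuine obstacle---realizing $\GL(V)$ as a Galois quotient---is defused by our freedom to pick $K$; the real content of the proposition is the orbit-counting in the ``only if'' direction that forces $|G|$ to be a prime power.
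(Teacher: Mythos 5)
Your proposal is correct and follows essentially the same route as the paper: the forward direction packages the Galois-orbit analysis showing $\C=\Vect_G^\omega(\overline{K})$ with two $\Gamma$-orbits, forcing all non-identity elements of $G$ to have a common prime order $p$ and hence $|G|=k+1=p^n$; the converse realizes the categorification as the representations of the twisted form of $V^*$ attached to a surjection $\Gamma\to\GL(V)$ with $\omega=1$, exactly as in the paper. The explicit orbit count verifying $X^2=k\bold 1\oplus(k-1)X$ is a welcome addition but does not change the argument.
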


For example, for $k\le 10$, $S_k$ is categorifiable except for $k=5,9$.

The problem of explicit classification of categorifications of $S_k$
for $k=p^n-1$ is rather tricky. For simplicity consider the case
when $p>3$, and $K$ contains a primitive root of unity of order $p$.
In this case, $H^3(V,\overline{K}^\times)=S^2V^*\oplus \wedge^3V^*$
as a $\Gamma-$module. Clearly, $\omega$ must be $\Gamma-$invariant.
Since $\overline{\Gamma}$ is transitive, $\omega$ cannot have a
non-zero component $q_\omega$ in $S^2V^*$, since the level sets of
$q_\omega(v,v)$ are invariant under $\Gamma$. So we have $\omega\in
(\wedge^3V^*)^\Gamma$, and each such $\omega$ will give rise to a
categorification. Apart from the case of vanishing $\omega$, if
$m=3d$, we have the example $\Gamma=\SL_3(\Bbb F_q)$, where $q=p^d$.
In this case, we can take $\omega(v,w,u)=\psi(v\wedge w\wedge u)$,
where $v,w,u\in \Bbb F_q^3$, and $\psi: \Bbb F_q\to \Bbb F_p$ is any
non-zero linear function. The full classification of
categorifications can be obtained by using the known classification
of transitive finite linear groups (see e.g. \cite{BH}); we will not
do it here.

\subsubsection{The rings $T_k$.}
Now, consider the fusion ring $T_k$ defined by (\ref{sk}), where
$k$ is a non-negative integer. Let us classify categorifications
$\overline{\C}$ of this ring over a field $K$, say, of
characteristic zero. We have $\FPdim(X)=2k-1$. Passing to the
algebraic closure, we get $X=(\bigoplus_{i\in O}X_i)^{\ell}$, where
$O$ is a Galois group orbit. Now, it is clear that $\FPdim(X_i)=1$
(otherwise we would not be able to write a decomposition for
$X_i\otimes X_i^*$), so we get $2k-1=\ell |O|$. Also, from the
equation for $X\otimes X^*$ we get $\ell^2|O|=2k-1$, which implies
that $\ell=1$, $|O|=2k-1$. So $\C:=\overline{\C}\otimes_K
\overline{K}$ is the category $\Vect_G^\omega(\overline{K})$ for
some group $G$ and $3-$cocycle $\omega\in
H^3(G,\overline{K}^\times)$.

The absolute Galois group $\Gamma$ of $K$ acts on $G$ and has three
orbits, namely $\lbrace{1\rbrace}$, $O$, and $O^{-1}$. This means
that all elements of $G$ other than $1$ have the same order, which
then has to be an (odd) prime $p$, such that $4k-1=p^{2n+1}$ for
some non-negative integer $n$. Moreover, if $G$ were non-abelian
then the Galois group would have to preserve its non-trivial proper
subgroup (the center), which is impossible because we have just
three orbits. Thus $G$ must be abelian, hence a vector space $V$ of
dimension $2n+1$ over $\Bbb F_p$, and we have a homomorphism $\Phi:
\Gamma\to \GL(V)$.

Now, for any such $p,n$, we can take $V=\Bbb F_q$, where
$q=p^{2n+1}$, and $\Gamma=(\Bbb F_q^\times)^2$. Since $p=4l-1$, and
$q$ is an odd power of $p$, we get that $-1$ is a non-square in
$\Bbb F_q$, so $\Gamma$ satisfies the above condition, and we get a
categorification with trivial $\omega$. Thus, we obtain the
following proposition.

\begin{proposition}
The weak fusion ring $T_k$ is categorifiable over a suitable field
if and only if $4k-1$ is an (odd) power of a prime. \qed
\end{proposition}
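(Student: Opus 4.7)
My plan is to mirror the analysis just carried out for $S_k$ in Subsection \ref{sk1}, exploiting the fusion rules of $T_k$ together with Galois descent to reduce the classification to a statement about finite linear groups acting on vector spaces over $\mathbb{F}_p$. The necessary direction is the substantive one; the sufficient direction will be a direct construction using squares in a finite field.

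For necessity, let $\overline{\C}$ be a categorification of $T_k$ over $K$ (of characteristic zero, for simplicity), set $\C:=\overline{\C}\otimes_K\overline{K}$, and decompose $X=(\bigoplus_{Z\in O}Z)^\ell$ under the action of $\Gamma:=\mathrm{Gal}(\overline{K}/K)$. Since $\FPdim(X)=2k-1$ and since any simple summand $Z$ must be invertible (otherwise the decomposition of $Z\otimes Z^*$ could not be compatible with the coefficient $2k-1$ of $\mathbf{1}$ in $X\otimes X^*$), I obtain the two equations $\ell|O|=2k-1$ and $\ell^2|O|=2k-1$, which force $\ell=1$ and $|O|=2k-1$. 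Since the relation $X\otimes X=(k-1)X\oplus kX^*$ has unequal coefficients, $X\ne X^*$, so $O$ and $O^{-1}$ are distinct orbits, and $\C\simeq\Vect_G^\omega(\overline{K})$ with $|G|=1+2(2k-1)=4k-1$.

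The next step is to extract the structure of $G$ from the three-orbit condition $G\setminus\{1\}=O\sqcup O^{-1}$. All non-identity elements have the same prime order $p$, and if $G$ were non-abelian its center would be a proper nontrivial characteristic subgroup, contradicting the fact that $\Gamma$ has exactly three orbits on $G$. Hence $G=V$ is an $\mathbb{F}_p$-vector space, and the image $\overline{\Gamma}\subseteq\GL(V)$ has exactly two orbits on $V\setminus\{0\}$, which are swapped by the scalar $-1$ (which therefore lies outside $\overline{\Gamma}$). Writing $|V|=p^m=4k-1$ and reducing modulo $4$ then forces $p\equiv 3\pmod 4$ and $m$ odd, i.e.\ $4k-1=p^{2n+1}$ as asserted.

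For sufficiency, given an odd prime power $q=p^{2n+1}$ with $p\equiv 3\pmod 4$ equal to $4k-1$, I take $V:=\mathbb{F}_q$ and $\overline{\Gamma}:=(\mathbb{F}_q^\times)^2$ acting by multiplication. Because $q$ is an odd power of a prime $\equiv 3\pmod 4$, $-1$ is a non-square in $\mathbb{F}_q$, so multiplication by $-1$ interchanges the two $\overline{\Gamma}$-orbits on $\mathbb{F}_q^\times$, and any surjective $\Phi:\Gamma\twoheadrightarrow\overline{\Gamma}$ from the absolute Galois group of a suitable $K$ produces, via the twisted form of the algebraic group $V^*$ (with $\omega=1$), a categorification of $T_k$. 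The main obstacle I anticipate is the invertibility step in the first paragraph: one must argue rigorously that every Galois-summand of $X$ is invertible, which is where the precise coefficient of $\mathbf 1$ in $X\otimes X^*$ does the work and rules out non-group-theoretical categorifications.
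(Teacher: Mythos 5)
Your proposal is correct and follows essentially the same route as the paper's proof: decompose $X$ over $\overline{K}$ into a Galois orbit of invertibles, deduce $\ell=1$ and $|O|=2k-1$ from the two fusion relations, identify $\C$ with $\Vect_G^\omega(\overline{K})$ where $G$ is an $\mathbb{F}_p$-vector space on which $\Gamma$ has three orbits, and conversely realize $4k-1=p^{2n+1}$ via $V=\mathbb{F}_q$ with $\overline{\Gamma}=(\mathbb{F}_q^\times)^2$ using that $-1$ is a non-square. Your explicit reduction mod $4$ to get $p\equiv 3\pmod 4$ and the exponent odd just spells out what the paper leaves implicit.
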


For example, for $k\le 10$, $S_k$ is categorifiable except for $k=4,9,10$.

\subsubsection{Weak fusion rings of rank $2$}
Here is a generalization of the results of Subsection \ref{sk1} to
general weak fusion rings of rank $2$, proposed by V. Ostrik (based
on an argument by Josiah Thornton).

For a positive integer $a$
and a nonnegative integer $b$ define the weak fusion ring $S_{a,b}$
with basis $\bold 1,X$ and relations
$$
X^2=a\bold 1+bX,\, X^*=X.
$$

\begin{theorem}\label{sab}
The weak fusion ring $S_{a,b}$
is categorifiable over a suitable field of characteristic zero
if and only if either $a=b=1$ or there is a prime $p$ and
integers $m\ge 0,n\ge 1$ such that $a=p^{2m}(p^n-1)$ and $b=p^m(p^n-2)$.
\end{theorem}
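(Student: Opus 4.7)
The plan is, following Subsection \ref{sk1}, to base-change a categorification $\bar\C$ of $S_{a,b}$ to $\C:=\bar\C\otimes_K\bar K$ and analyze the Galois structure. Writing $X=(\bigoplus_{i\in O}X_i)^{\ell}$ in $\C$, with $O$ a single Galois orbit (as $X$ is simple over $K$) and $\ell\ge 1$, and letting $d_0:=\FPdim(X_i)$, the $\mathbf{1}$-multiplicity in $X\otimes X^*=a\mathbf{1}+bX$ yields $a=\ell^2|O|$, while the expansion $X_i\otimes X_i^*=\mathbf{1}+\sum_{j\in O}c_jX_j$ (the only nontrivial simples available are the $X_j$ for $j\in O$, since they are those in $X\otimes X^*$) gives $d_0^2=1+Cd_0$ with $C:=\sum c_j\in\Bbb Z_{\ge 0}$. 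So either $d_0=1$ (when $C=0$) or $d_0$ is irrational.

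In the irrational case, combining $d^2=a+bd$ with $d=\ell|O|d_0$ and $d_0^2=1+Cd_0$ gives $\ell^2|O|(|O|-1)=d(b-C\ell|O|)$. Since $d$ is irrational and the left side is a rational integer, both sides must vanish, forcing $|O|=1$, $\ell^2=a$ and $b=C\ell$. So $X=\ell X_1$ with $X_1$ a self-dual simple of $\C$ satisfying $X_1\otimes X_1=\mathbf{1}+CX_1$, $C\ge 1$. The classification of rank-$2$ fusion categories over $\bar K$ leaves only $C=1$ (Fibonacci). Since Fibonacci has $\Aut_\otimes(\C)=1$ and $\Aut_\otimes(\Id_\C)=1$ (its universal grading group is trivial), Proposition \ref{stan1} gives a unique form of $\C$ over $K$ in which $X_1$ remains simple, so $\ell=1$, yielding the exceptional case $a=b=1$.

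In the main case $d_0=1$, $\C$ is pointed and the argument of Subsection \ref{sk1} carries over verbatim: $\Gamma$ acts on $G$ with orbits $\{1\}$ and $O$, forcing $G$ abelian with all non-identity elements of a common prime order $p$, so $G=V=\Bbb F_p^n$, and $\Phi:\Gamma\to\GL(V)$ is transitive on $V\setminus\{0\}$. Expanding $X^2=\ell^2\sum_{v,w\in V\setminus\{0\}}X_{v+w}$ in $\Bbb Z[V]$ and splitting the sum by whether $v+w=0$ (giving $|O|=p^n-1$ pairs, each contributing $\mathbf{1}$) or $v+w\ne 0$ (each $k\ne 0$ receiving $p^n-2$ pairs) yields $X^2=\ell^2(p^n-1)\mathbf{1}+\ell(p^n-2)X$, so $a=\ell^2(p^n-1)$ and $b=\ell(p^n-2)$.

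The main obstacle is proving $\ell=p^m$. By Proposition \ref{stan1}, framed equivalence classes of forms of $\C=\Vect_V^\omega(\bar K)$ over $K$ (with fixed $\Phi$) form a torsor over $H^2(\Gamma,\Aut_\otimes(\Id_\C))=H^2(\Gamma,\Hom(V,\bar K^\times))$; since $V$ is $p$-torsion, so is this coefficient module, and the class of $\bar\C$ relative to the canonical split form (which has $\ell=1$ and $\End(X)=Z$, the center) is therefore $p$-torsion. The induced Brauer class $[D]\in\Br(Z)$ of $D:=\End_{\bar\C}(X)$ (where $Z/K$ is the fixed field of a point-stabilizer $\Gamma_v\subset\Gamma$, of degree $|O|=p^n-1$) is hence $p$-torsion, so by Brauer's theorem (Theorem \ref{bt}) its index $\ell$ is a power of $p$. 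For the converse, Fibonacci categorifies $a=b=1$, and for $a=p^{2m}(p^n-1)$, $b=p^m(p^n-2)$ one works over a sufficiently large field $K\supseteq\Bbb Q(\mu_p)$ (e.g.\ a rational function field with enough transcendentals) admitting a $p$-torsion Brauer class of index $p^m$, twisting the canonical form by it; the $H^3$-obstruction can be arranged to vanish by the choice of $K$.
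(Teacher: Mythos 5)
Your overall strategy tracks the paper's: base change to $\overline{K}$, analyze the Galois orbit $O$ and the multiplicity $\ell$, split into a ``pointed'' case and a ``Yang--Lee'' case, derive $a=\ell^2(p^n-1)$, $b=\ell(p^n-2)$, and reduce $\ell=p^m$ to Brauer's theorem (Theorem \ref{bt}). Your dichotomy via irrationality of $\FPdim(X_i)$ is a legitimate repackaging of the paper's split into $|O|=1$ (Ostrik's rank-$2$ classification) versus $|O|>1$ (Thornton's integrality argument), and the first half of your necessity argument is essentially correct. However, there are two genuine gaps. First, your proof that $\ell$ is a power of $p$ hinges on comparing $\overline{\C}$ to ``the canonical split form (which has $\ell=1$ and $\End(X)=Z$)'' inside the $H^2(\Gamma,\Hom(V,\mu_p))$-torsor of Proposition \ref{stan1}. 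You never justify that a form with $\ell=1$ exists for the given descent datum $(\Phi,\omega)$: the torsor has no preferred base point, and for a nontrivial associator $\omega$ it is not clear that any choice of the coherence data $J$ makes the orbit object $\bigoplus_{i\in O}X_i$ itself (rather than a multiple of it) $\Gamma$-stable. Without such a reference form you only learn that the classes $[\End(X)]$ of the various forms differ by $p$-torsion, not that any one of them is $p$-torsion. The paper avoids this entirely: over the intermediate field $L=\overline{K}^{\Gamma'}$ the fusion rule $Y_i\otimes Y_j=\ell Y_{i+j}$ together with a dimension count gives $D_i\otimes_L D_j\cong \mathrm{Mat}_\ell(D_{i+j})$, so $i\mapsto [D_i]$ is a group homomorphism $V\to {\rm Br}(L)$, whence $p[D_i]=0$ and Brauer's theorem applies. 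This argument is available in your setup and would close the gap.

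Second, your sufficiency argument is essentially a placeholder. A single $p$-torsion Brauer class of index $p^m$ is not the right datum: to twist the split form of $\Vect_{\Bbb F_q}(\overline{K})$ into a categorification with the correct fusion rules you need a class in $H^2(\Gamma,\Hom(\Bbb F_q,\mu_p))\cong \Hom(\Bbb F_q,{\rm Br}_p(L))^{\Bbb F_q^\times}$, i.e.\ an $\Bbb F_q^\times$-equivariant homomorphism $\Bbb F_q\to {\rm Br}_p(L)$, and one must arrange that \emph{every} nonzero value of this homomorphism has index exactly $p^m$ (equivariance makes the indices equal along the orbit, but injectivity/nondegeneracy and the exact value $p^m$ have to be engineered). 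Producing a pair $K\subset L$ with the required cyclic Galois group $\Bbb F_q^\times$ and such a nondegenerate equivariant family of division algebras is the real content of the converse; the paper does it explicitly with $L=\Bbb C(a_{ijk},b_{ijk})$ carrying a cyclic permutation action and tensor products of generic degree-$p$ cyclic algebras. As written, your converse does not establish existence. (A smaller point: in the Yang--Lee case you should also record why a split form exists over $K$ at all --- namely $\sqrt5\in K$ is forced and the category is defined split over $\Q(\sqrt5)$ --- before invoking uniqueness of forms to get $\ell=1$.)
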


Note that in the case $b=0$ (i.e., $p=2$, $n=1$) Theorem \ref{sab}
reduces to the result of Subsection \ref{rm1} (in characteristic
$0$), and in the case $a=b+1$ (i.e., $m=0$) Theorem \ref{sab}
reduces to the result of Subsection \ref{sk1}.

\begin{proof}
Let $\overline{\C}$ be a categorification  of $S_{a,b}$ over
a field $K$ of characteristic zero.
Passing to the algebraic closure, we get $X=(\bigoplus_{i\in
O}X_i)^{\ell}$, where $O$ is a Galois group orbit. Consider two cases.

{\bf Case 1.} $|O|=1$. Then $\C:=\overline{\C}\otimes_K
\overline{K}$ has two simple objects, $\bold 1$ and $Y$, and
$Y\otimes Y=\bold 1\oplus rY$. By a theorem of Ostrik \cite{O},
$r=0$ or $r=1$. If $r=0$, and $X=\ell Y$, then $b=0$ and
$a={\ell}^2$, so the Theorem follows from Theorem \ref{prime2}
(namely, $p=2$, $n=1$, and $m$ is arbitrary).

If $r=1$, then $\C$ is the Yang-Lee category or its Galois
conjugate. Let us show that any form $\overline{\C}$ of this
category is split.
\footnote{Here is another proof of this fact. Suppose we have a
non-split form with simple objects $\bold 1,X$ and ${\rm End}(X)=D$,
a central division algebra of dimension $s^2$ over a ground field
$K$ (where $s>1$). Then extension of scalars turns $X$ into $sY$, so
the fusion rule for this form is $X^2=s^2\bold 1+sX$, and $X^*=X$.
Thus we have $D^{op}\cong D$, and $D\otimes D$ (which sits inside
$\End(X\otimes X)$) injects into $\End(s^2\bold 1\oplus sY)={\rm
Mat}_{s^2}(K)\oplus {\rm Mat}_s(D)$. But since $D^{op}\cong D$,
$D\otimes D\cong {\rm Mat}_{s^2}(K)$, and there is no homomorphisms
from ${\rm Mat}_{s^2}(K)$ to ${\rm Mat}_s(D)$ (since the former
contains nilpotent elements of nilpotency degree $>s$). This is a
contradiction. Thus, $\overline{\C}$ is split and we have $a=b=1$.}
First of all, any field of definition of a
Yang-Lee category must contain $\sqrt{5}$, since it occurs in the
M\"uger's squared norm of the nontrivial object $Y$. Next, it is
explained in \cite{O} that the Yang-Lee category is defined as a
split category over $\Bbb Q(\sqrt{5})$. Finally, this category has
no nontrivial tensor auto-equivalences or tensor automorphisms of
the identity functor, so by the theory of forms of tensor
categories, the split form has no nontrivial twists, as desired.

{\bf Case 2.} $|O|>1$. In this case we use the argument due to
J. Thornton \cite{Th}. Namely, we claim that $\FPdim(X_i)=1$.
To see this, note first that since $X_i$ are permuted by the
Galois group, $\FPdim(X_i)=d$ is independent of $i$.
Next, if $i\ne j$ (which is possible since $|O|>1$)
then $X_i\otimes X_j^*$ is a direct sum of $X_k$, so
$d$ is an integer. Finally, $X_i\otimes X_i^*$
is $\bold 1$ plus a sum of $X_k$, so
$d^2-1$ is divisible by $d$, hence $d=1$.

So ${\rm FPdim}(X)=\ell |O|$, and we get
$$
(\ell |O|)^2=a+b\ell |O|.
$$
Also, from the relation $XX^*=X^2$ we get $a=\ell^2|O|$, which gives
$ b=\ell (|O|-1)$.

Now, consider the group $V$ formed by the objects $X_i$ and the
unit object $X_0=\bold 1$. Since $V$ consists of two Galois orbits,
it is a vector space over $\Bbb F_p$ for some prime $p$.
Thus, $|O|=p^n-1$ for some $n\ge 1$.

It remains to determine $\ell$. To this end, let $\Gamma$ be the
Galois group of $\overline{K}$ over $K$ (which acts on $V$), and let
$\Gamma'$ be the stabilizer of some $0\ne i\in V$. Let
$L:=\overline{K}^{\Gamma'}$. Then $L$ is a finite extension of $K$,
and $\overline{\C}\otimes_K L$ has simple objects $Y_i$ labeled by
$i\in V$ (with $Y_0=\bold 1$ and $Y_i^*=Y_{-i}$), and one has
$Y_i\otimes Y_j=\ell Y_{i+j}$ if $i\ne -j$, $Y_i\otimes
Y_{-i}=\ell^2\bold 1$. Let $D_i:=\End(Y_i)$. Then $D_i$ has
dimension $\ell^2$ and it defines an element of order $p$ in the
Brauer group ${\rm Br}(L)$. So by Brauer's theorem (Theorem
\ref{bt}), $D_i$ has dimension $p^{2m}$ for some nonnegative integer
$m$. This implies that $\ell=p^m$ for some $m\ge 0$, and thus $a,b$
are as required.

Conversely, if $a=p^{2m}(p^n-1)$ and $b=p^m(p^n-2)$, then the ring
$S_{a,b}$ admits a categorification. In showing this, we may (and
will) assume that $m>0$, since the case $m=0$ is considered in
Subsection \ref{sk1}. Namely, take the category $\C:=\Vect_{{\Bbb
F}_q}$, where $q=p^n$. Assume that the field $K$ is such that the
Galois group $\Gamma={\rm Gal}(\overline{K}/K)$ has a normal
subgroup $\Gamma'$ with $\Gamma/\Gamma'=\Bbb F_q^\times$, and let
$L\subseteq \overline{K}$ be the fixed field of this subgroup (it is
a cyclic Galois extension of $K$ of degree $p^n-1$). So we can make
$\Gamma$ act on $\Bbb F_q$ by multiplications so that $\Gamma'$ acts
trivially. We have a form $\overline{\C}'$ of $\C$ over $K$ defined
by this action, which is split over $L$ (namely, the category of
representations of the twisted form of the additive group of $\Bbb
F_q$ corresponding to the action of $\Gamma$ on $\Bbb F_q$); this
form categorifies the ring $S_k$ with $k=p^n-1$. Now, this form can
be twisted by an element $\eta$ of
$$
H^2(\Gamma,\Hom(\Bbb F_q,\mu_p))= H^2(\Gamma',\Hom(\Bbb
F_q,\mu_p))^{\Gamma/\Gamma'}= \Hom(\Bbb F_q,{\rm Br}_p(L))^{\Bbb
F_q^\times},
$$
where the action of $\Bbb F_q^\times$ on
${\rm Br}_p(L)$ is through the isomorphism of
$\Bbb F_q^\times$ with $\Gamma/\Gamma'$.

Now we would like to choose a suitable field $K$. We will take
$L:={\Bbb C}(a_{ijk},b_{ijk})$, where $i=1,\dots,m$,
$j=1,\dots,p^n-1$ and $k=1,\dots,n$. Then we can make the group
$\Bbb F_q^\times$ act on $L$ by cyclic permutations of $j$, and
define $K$ to be the subfield of invariants. Then $K$, $L$ have the
required properties. Define $D_{jk}$ to be the division algebra over
$L$ given by the formula
$$
D_{jk}:=\otimes_{i=1}^n D_{ijk},
$$
where $D_{ijk}$ is generated by $x_{ijk}$ and $y_{ijk}$
with
$$
x_{ijk}^p=a_{ijk},\, y_{ijk}^p=b_{ijk},\, x_{ijk}y_{ijk}=\zeta
y_{ijk}x_{ijk},
$$
where $\zeta$ is a primitive $p-$th root of unity. Let $E$ be the
subgroup in ${\rm Br}_p(L)$ generated by $D_{jk}$,
$j=1,\dots,p^n-1$, $k=1,\dots,n$. Since $D_{jk}$ are linearly
independent vectors in ${\rm Br}_p(L)$, the space $E$ is isomorphic
to the space of matrices over $\Bbb Z/p\Bbb Z$, of size $n$ by
$p^n-1$, with $\Bbb F_q^\times=\Bbb Z/(p^n-1)\Bbb Z$ acting by
cyclic permutations of columns. Thus,
$$
\Hom(\Bbb F_q,E)^{\Bbb F_q^\times}=\Hom(\Bbb F_q,(\Bbb Z/p\Bbb Z)^n)
\subseteq \Hom(\Bbb F_q,{\rm Br}_p(L))^{\Bbb F_q^\times}.
$$
Take a nondegenerate element $\eta$ from this group
(i.e., defining an isomorphism $\Bbb F_q\to (\Bbb Z/p\Bbb Z)^n$).
Then the twist $\overline{\C}$ of $\overline{\C}'$ by $\eta$
is a categorification of $S_{a,b}$, as desired.
\end{proof}

\begin{remark}
The assumption of characteristic zero is needed here because
Ostrik's classification \cite{O} of fusion categories of rank $2$ is
unknown in positive characteristic. All the other arguments in this
subsection can be extended to positive characteristic.
\end{remark}

\end{document}